\theoremstyle{plain}
\newtheorem{thm}{Theorem}[section]
\newtheorem{lemma}[thm]{Lemma}
\theoremstyle{definition}
\newtheorem{definition}[thm]{Definition}
\newtheorem{example}[thm]{Example}
\newcommand{\Z}{\mathbb{Z}}
\newcommand{\bu}{\mathbf{u}}
\newcommand{\bv}{\mathbf{v}}
\newcommand{\bc}{\mathbf{c}}
\newcommand{\ms}{\bar{s}}
\begin{document}

\title{Optimal lower bound for $2$-identifying code in the hexagonal grid\thanks{This paper has been presented in part at the International Workshop on Coding and Cryptography 2011, WCC 2011.}}

\author{\textbf{Ville Junnila}\thanks{Research supported by the Academy of Finland under grant 210280.} \\
Turku Centre for Computer Science TUCS and\\
Department of Mathematics\\
University of Turku, FI-20014 Turku, Finland\\
viljun@utu.fi \and \textbf{Tero Laihonen}\\
Department of Mathematics\\
University of Turku, FI-20014 Turku, Finland\\
terolai@utu.fi}
\date{}
\maketitle

\begin{abstract}
An $r$-identifying code in a graph $G = (V,E)$ is a subset $C \subseteq V$ such that for each $u \in V$ the intersection of $C$
and the ball of radius $r$ centered at $u$ is non-empty and unique.
Previously, $r$-identifying codes have been studied in various grids. In particular, it has been shown that there exists a $2$-identifying
code in the hexagonal grid with density $4/19$ and that there are no
$2$-identifying codes with density smaller than $2/11$. Recently, the lower bound has been improved to $1/5$ by Martin and Stanton (2010).
In this paper, we prove that the $2$-identifying code with density
$4/19$ is optimal, i.e. that there does not exist a $2$-identifying
code in the hexagonal grid with smaller density.
\end{abstract}
\noindent\emph{Keywords:} Identifying code; optimal code; hexagonal
grid

\section{Introduction}

Let $G = (V, E)$ be a simple connected and undirected graph with $V$
as the set of vertices and $E$ as the set of edges. Let $u$ and $v$
be vertices in $V$. If $u$ and $v$ are adjacent to each other, then
the edge between $u$ and $v$ is denoted by $\{u, v\}$ (or in short by $uv$). The \emph{distance} between $u$ and $v$ is denoted by $d(u,v)$ and is defined as the number of edges in any shortest path between
$u$ and $v$. Let $r$ be a positive integer. We say that $u$
$r$\emph{-covers} $v$ if the distance $d(u,v)$ is at most $r$. The
\emph{ball of radius} $r$ \emph{centered at} $u$ is defined as
\[
B_r(u) = \{ x \in V \ | \ d(u,x) \leq r \} \textrm{.}
\]

A non-empty subset of $V$ is called a \emph{code} in $G$, and its
elements are called \emph{codewords}. Let $C \subseteq V$ be a code
and $u$ be a vertex in $V$. An \emph{I-set} (or an \emph{identifying
set}) of the vertex $u$ with respect to the code $C$ is defined as
\[
I_r(C;u) = I_r(u) = B_r(u) \cap C \textrm{.}
\]
The following definition is due to Karpovsky \emph{et al.}
\cite{kcl}.
\begin{definition}
Let $r$ be a positive integer. A code $C \subseteq V$ is said to be
$r$-\emph{identifying} in $G$ if for all $u,v \in V$ ($u \neq v$) the set $I_r(C;u)$ is non-empty and
\[
I_r(C;u) \neq I_r(C;v) \textrm{.}
\]
\end{definition}

Let $X$ and $Y$ be subsets of $V$. The \emph{symmetric difference} of $X$ and $Y$ is defined as $X \, \triangle \, Y = (X \setminus Y) \cup (Y \setminus X)$. We say that the vertices $u$ and $v$ are $r$\emph{-separated} by a code $C \subseteq V$ (or by a codeword of
$C$) if the symmetric difference $I_r(C;u) \, \triangle \, I_r(C;v)$
is non-empty. The definition of $r$-identifying codes can now be
reformulated as follows: $C \subseteq V$ is an $r$-identifying code
in $G$ if and only if for all $u,v \in V$ ($u \neq v$) the vertex $u$ is $r$-covered by a codeword of $C$ and 
\[
I_r(C;u) \, \triangle \, I_r(C;v) \neq \emptyset \textrm{.}
\]

In this paper, we study identifying codes in the hexagonal grid. We
define the hexagonal grid $G_H = (V_H, E_H)$ using the brick wall
representation as follows: the set of vertices $V_H = \Z^2$ and the
set of edges
\[
E_H = \{ \{ \bu=(i,j), \bv \} \ | \ \bu, \bv \in \Z^2, \bu - \bv \in \{(0,(-1)^{i+j+1}), (\pm 1, 0)\} \} \textrm{.}
\]
This definition is illustrated in Figure~\ref{HexBrick}. The
hexagonal grid can also be illustrated using the honeycomb
representation as in Figure~\ref{HexHoneycomb}. In both
illustrations, lines represent the edges and intersections of the
lines represent the vertices of $G_H$. The labeling of the
vertices in the brick wall representation is self-explanatory. This
labeling can also be applied to the honeycomb representation, if
we visualize the honeycomb to be obtained from the brick wall by
squeezing it from left and right. For an example of the labeling
of the vertices, we refer to Figure~\ref{HexDefinitionIllustrated}.

\begin{figure}[ht]
\centering
\subfigure[Brick wall]{
\includegraphics[height=95pt]{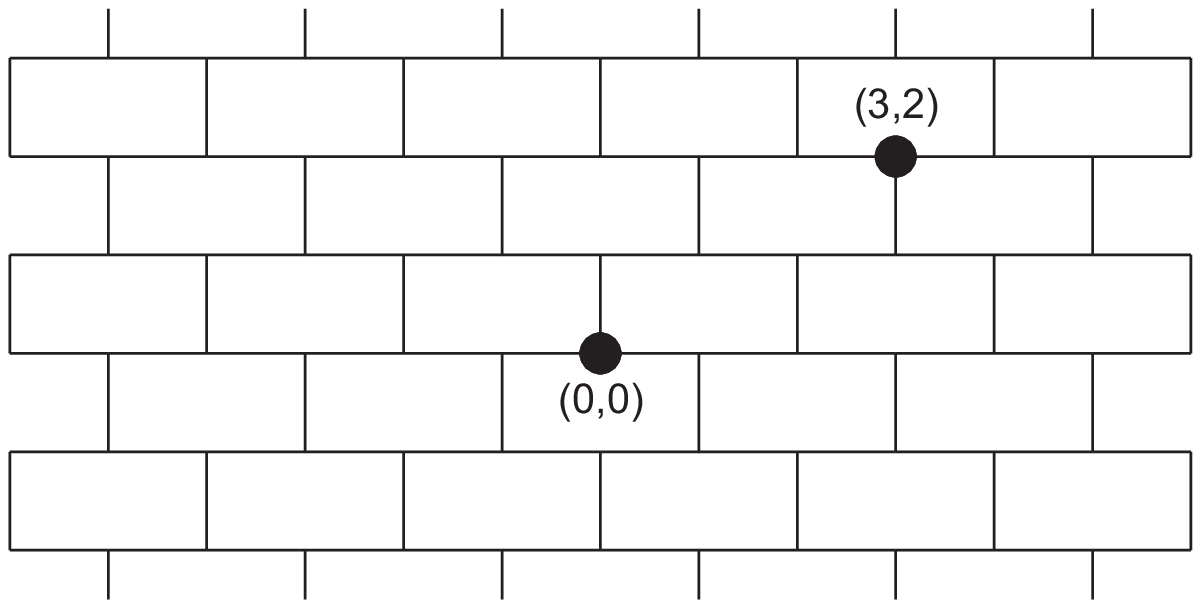}
\label{HexBrick}
}
\subfigure[Honeycomb]{
\includegraphics[height=95pt]{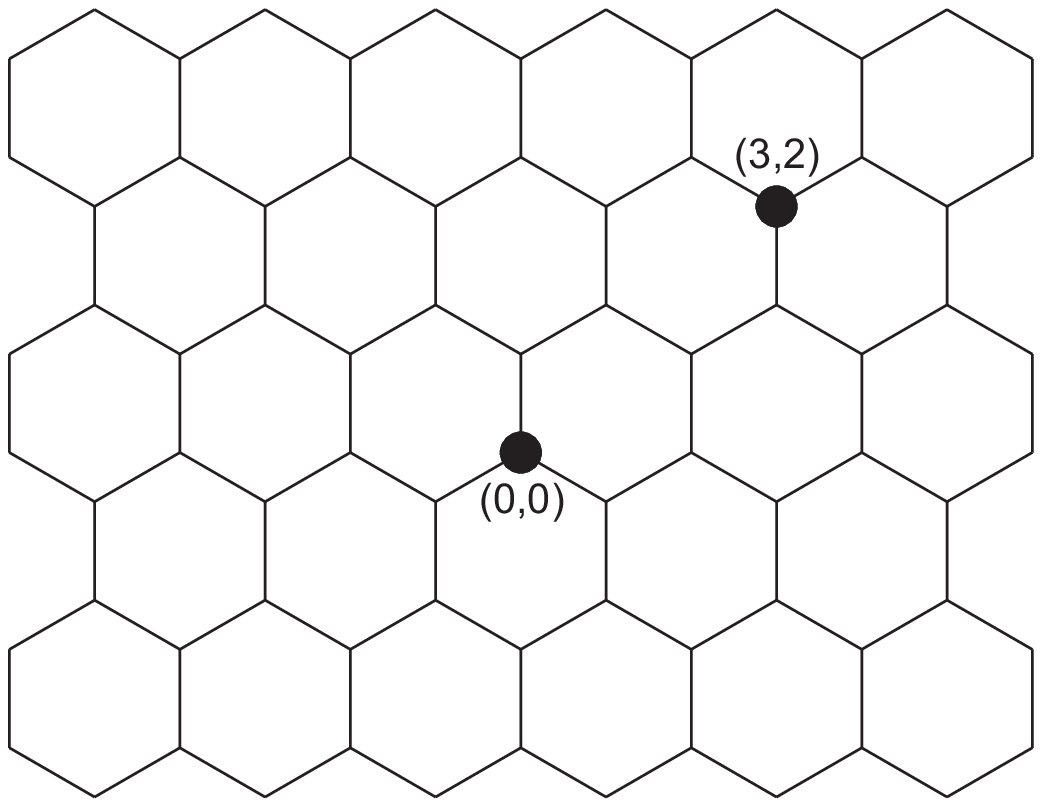}
\label{HexHoneycomb}
}
\caption{The brick wall and the honeycomb representations illustrated.} 
\label{HexDefinitionIllustrated}
\end{figure}

To measure the size of an identifying code in the infinite hexagonal
grid, we introduce the notion of density. For the formal definition,
we first define
\[
Q_n = \{ (x,y) \in V_H \ | \ |x| \leq n, |y| \leq n \} \textrm{.}
\]
Then the \emph{density} of a code $C \subseteq V_H$ is defined as
\[
D(C) = \limsup_{n \rightarrow \infty} \frac{|C \cap Q_n|}{|Q_n|} \textrm{.}
\]
Naturally, we try to construct identifying codes with as small
density as possible. Moreover, we say that an $r$-identifying code is \emph{optimal}, if there do not exist any $r$-identifying codes with
smaller density.

Previously, $r$-identifying codes in $G_H$ have been studied in
various papers. The first results concerning $r$-identifying codes in $G_H$ have been presented in the seminal paper \cite{kcl} in the case $r=1$. Later these results have been improved by showing that there
exists a $1$-identifying code with density $3/7$ (see Cohen \emph{et al.} \cite{chlz3}) and that there do not exist $1$-identifying codes in $G_H$ with density smaller than $12/29$ (see Cranston and Yu \cite{CYlbh}). For general $r \geq 2$, Charon \emph{et al.} \cite{chhl} showed that each $r$-identifying code $C$ in $G_H$ has $D(C) \geq 2/(5r+3)$ if $r$ is even and $D(C) \geq 2/(5r+2)$ if $r$ is odd. They also presented a construction for each $r \geq 2$ giving an $r$-identifying code $C \subseteq V_H$ with $D(C) \sim 8/(9r)$.

For small values of $r$, the previous constructions have been
improved in \cite{chl} by Charon \emph{et al}. In particular, it is shown that there exists a $2$-identifying code in $G_H$ with density $4/19$. 
In the case $r=2$, the aforementioned general lower bound is improved in Martin and Stanton \cite{MSlbg} by showing that the density of any $2$-identifying code in $G_H$ is at least $1/5$. In this paper, we further improve this lower bound to $4/19$. In other words, we show that the previously presented $2$-identifying code with density $4/19$ is optimal. (In \cite{MSlbg}, it is also shown that there do not exist any $2$-identifying codes in the \emph{square grid} with density smaller than $6/37$. In a forthcoming paper, we are going to improve this lower bound using somewhat different ideas as in this paper.)

The organization of the paper is as follows. In
Section~\ref{HexBasics}, we first introduce some preliminary
definitions and results. Then, in Section~\ref{HexLowerBound}, we
proceed with the actual proof of the lower bound.


\section{Basics} \label{HexBasics}

Let $G = (V, E)$ be a simple connected and undirected graph. Assume
also that $C$ is a code in $G$. The following concept of the share of a codeword has been introduced by Slater in \cite{S:fault-tolerant}.
The \emph{share} of a codeword $c \in C$ is denoted by $s_r(c)$ and
defined as
\[
s_r(C; c) = s_r(c) = \sum_{u \in B_r(c)} \frac{1}{|I_r(C; u)|} \textrm{.}
\]
The notion of share proves to be useful in determining lower bounds
of $r$-identifying codes (as explained in the following paragraph).

Assume that $G=(V,E)$ is a finite graph and $D$ is a code in $G$ such that $B_r(u) \cap D$ is non-empty for all $u \in V$. Then it is easy to conclude that $\sum_{c \in D} s_r(D;c) = |V|$. Assume further that $s_r(D;c) \leq \alpha$ for all $c \in D$. Then we have $|V| \leq
\alpha |D|$, which immediately implies
\[
|D| \geq \frac{1}{\alpha} |V| \textrm{.}
\]
Assume then that for any $r$-identifying code $C$ in $G$ we have
$s_r(C;c) \leq \alpha$ for all $c \in C$. By the aforementioned
observation, we then obtain the lower bound $|V|/\alpha$ for the size of an $r$-identifying code in $G$. In other words, by determining the maximum share for any $r$-identifying code, we obtain a lower bound
for the minimum size of an $r$-identifying code.

The previous reasoning can also be generalized to the case when an
infinite graph is considered. In particular, if for any
$r$-identifying code in $G_H$ we have $s_r(C;c) \leq \alpha$ for all
$c \in C$, then it can be shown that the density of an
$r$-identifying code in $G_H$ is at least $1/\alpha$ (compare to
Theorem~\ref{HexR2MainThm}). The main idea behind the proof of the
lower bound (in Section~\ref{HexLowerBound}) is based on this
observation, although we use a more sophisticated method by showing
that for any $2$-identifying code the share is on \emph{average} at
most $19/4$. In Theorem~\ref{HexR2MainThm}, we present a formal proof to verify that this method is indeed valid.

\medskip

In the proof of the lower bound, we need to determine upper bounds for shares of codewords. 
To formally present a way to estimate shares, we first need to introduce some notations.

Let $D \subseteq V$ be a code and $c$ be a codeword of $D$. Consider then the $I$-sets $I_r(D;u)$ when $u$ goes through all the vertices in $B_r(c)$. (Notice that all of these $I$-sets do not have to be different.) Denote the different identifying sets by $I_1, I_2, \ldots, I_k$, where $k$ is a positive integer. Furthermore, denote the number of identifying sets equal to $I_j$ by $i_j$ $(j=1,2,\ldots,k)$. Now we are ready to present the following lemma, which provides a method to estimate the shares of the codewords.
\begin{lemma} \label{SimpleEstLemma}
Let $C$ be an $r$-identifying code in $G$ and let $D$ be a non-empty subset of $C$. For $c \in D$, using the previous notations, we have
\[
s_r(C;c) \leq \sum_{j=1}^{k} \left( \frac{1}{|I_j|} + (i_j-1)\frac{1}{|I_j|+1} \right) \textrm{.}
\]
\end{lemma}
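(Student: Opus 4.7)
The plan is to start from the definition $s_r(C;c) = \sum_{u \in B_r(c)} 1/|I_r(C;u)|$ and partition $B_r(c)$ according to the value of $I_r(D;u)$. Since these values take exactly the distinct sets $I_1, \ldots, I_k$ with multiplicities $i_1, \ldots, i_k$, the share can be rewritten as
\[
s_r(C;c) \;=\; \sum_{j=1}^{k}\; \sum_{\substack{u \in B_r(c)\\ I_r(D;u) = I_j}} \frac{1}{|I_r(C;u)|}.
\]
From $D \subseteq C$ one gets only the weak pointwise bound $|I_r(C;u)| \geq |I_j|$ inside the $j$-th class, which would yield the too-coarse estimate $\sum_{j} i_j/|I_j|$. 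The trick is to sharpen this by invoking the identifying hypothesis on $C$.

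Fix $j$ and look at the inner sum. It runs over $i_j$ pairwise distinct vertices $u$, and because $C$ is $r$-identifying, their sets $I_r(C;u)$ must be pairwise distinct as well. Since each of these sets contains $I_j$ (because $D \subseteq C$), at most one of them can coincide with $I_j$; the remaining $i_j - 1$ sets are proper supersets of $I_j$ and therefore have cardinality at least $|I_j|+1$. Hence the worst-case contribution of the $j$-th class to the share is
\[
\frac{1}{|I_j|} \;+\; (i_j - 1)\cdot\frac{1}{|I_j|+1},
\]
and summing over $j = 1, \ldots, k$ produces the bound stated in the lemma.

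I do not foresee a real obstacle; the argument is essentially a one-line counting observation that turns the identifying hypothesis into a $+1$ in the denominator for all but one vertex of each equivalence class. The only minor point to verify is that every $I_j$ is non-empty, so that the fractions make sense: this holds because every $u \in B_r(c)$ satisfies $c \in B_r(u)$, giving $c \in I_r(D;u)$ and thus $|I_j| \geq 1$ for each $j$.
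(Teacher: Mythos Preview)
Your proof is correct and follows exactly the same approach as the paper's: partition $B_r(c)$ into the classes $\mathcal{I}_j=\{u\in B_r(c)\mid I_r(D;u)=I_j\}$, use $D\subseteq C$ to get $I_j\subseteq I_r(C;u)$, and invoke the identifying property of $C$ to conclude that at most one vertex in each class can have $I_r(C;u)=I_j$ while the remaining $i_j-1$ must have $|I_r(C;u)|\geq |I_j|+1$. The paper's proof says precisely this in two sentences; your additional remark that $c\in I_r(D;u)$ forces $|I_j|\geq 1$ is a welcome clarification.
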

\begin{proof}
Assume that $c \in D$. Then, for each $j=1,2,\ldots,k$, define $\mathcal{I}_j = \{ u \in B_r(c) \, | \, I_j=I_r(D;u) \}$. 
Now it is obvious that for at most one vertex $u \in \mathcal{I}_j$ we have $I_j = I_r(C;u)$ and the other vertices of $\mathcal{I}_j$ are $r$-covered by at least $|I_j|+1$ codewords of $C$. Hence, the claim immediately follows.
\end{proof}


The previous lemma will be used numerous times in the following presentation. The computations needed in applying this lemma may sometimes be a little bit tedious, but always very straightforward. It is also quite easy to implement an algorithm to compute the upper bound given by the lemma. Furthermore, the use of the previous lemma is illustrated in the following example.
\begin{figure}
\centering
\subfigure[The first case]{
\includegraphics[height=120pt]{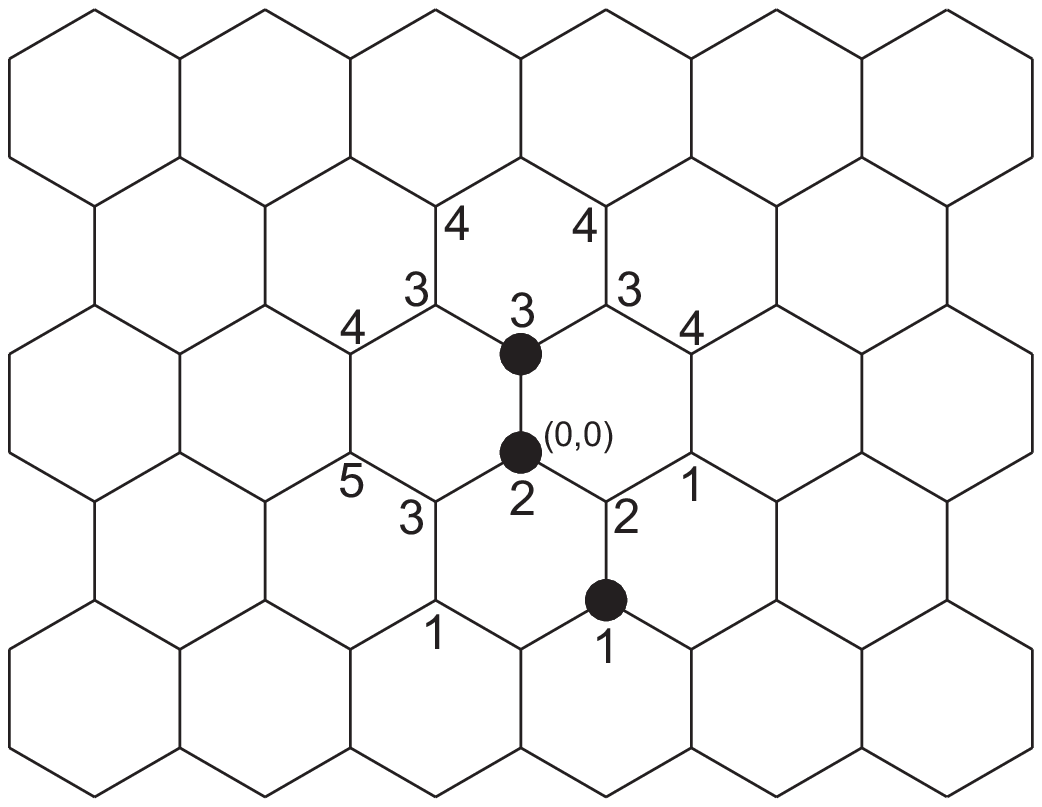}
\label{AdjacentCaseA}
}
\subfigure[The second case]{
\includegraphics[height=120pt]{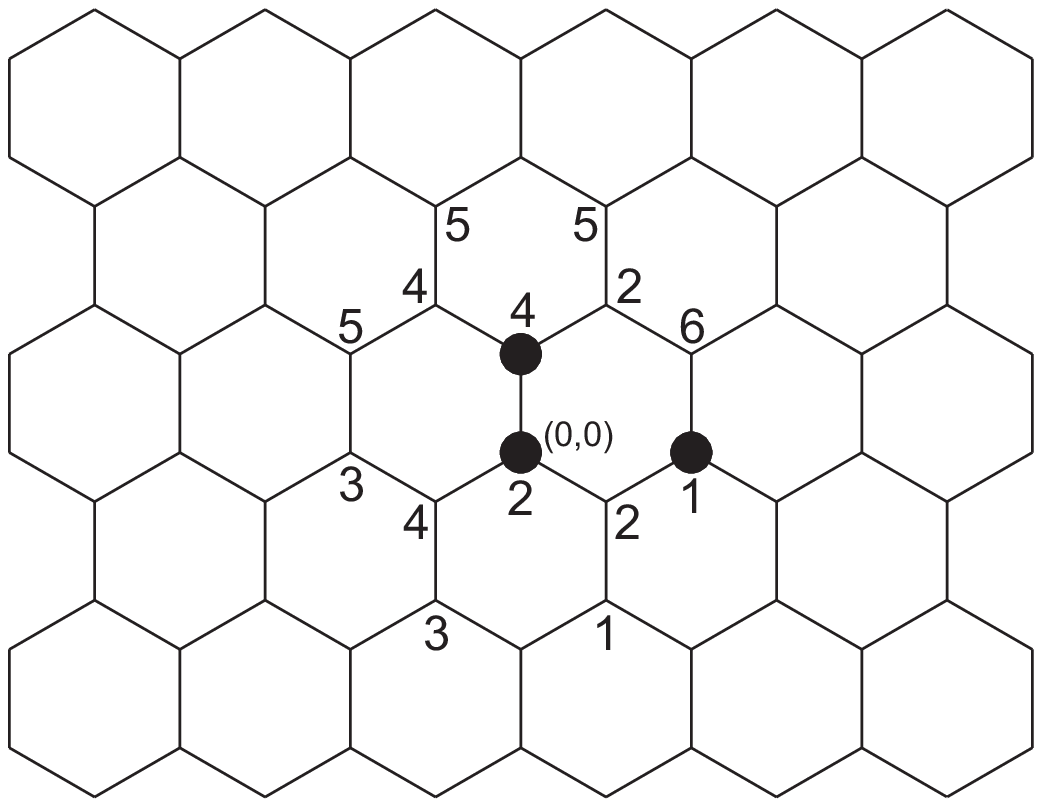}
\label{AdjacentCaseB}
}
\caption{The cases of Example \ref{IllExampleAdjacentLemma} illustrated. The black dots represent codewords of $C$.} 
\label{ExampleIllustrated}
\end{figure}

\begin{example} \label{IllExampleAdjacentLemma}
Let $C$ be a $2$-identifying code in the hexagonal grid $G_H$. For the first case (see Figure~\ref{AdjacentCaseA}), we assume that $D=\{ (0,0), (0,1), (1,-1) \}$ is a subset of $C$. Now we have the following facts:
\begin{itemize}
\item $I_2(D;\bu) = \{(0,0),(1,-1)\}$ for $\bu = (-1,-1), (1,-1)$ or $(2,0)$ (the vertices labeled with $1$ in the figure),
\item $I_2(D;\bu) = \{(0,1),(0,0),(1,-1)\}$ for $\bu = (0,0)$ or $(1,0)$ (the vertices labeled with $2$ in the figure),
\item $I_2(D;\bu) = \{(0,0),(0,1)\}$ for $\bu = (-1,1), (0,1), (1,1)$ or $(-1,0)$ (the vertices labeled with $3$ in the figure),
\item $I_2(D;\bu) = \{(0,1)\}$ for $\bu = (-2,1), (-1,2), (1,2)$ or $(2,1)$ (the vertices labeled with $4$ in the figure), and
\item $I_2(D; (-2,0)) = \{ (0,0) \}$ (the vertex labeled with 5 in the figure).
\end{itemize}
Thus, by Lemma~\ref{SimpleEstLemma}, we obtain that
\[
s_2(C;(0,0)) \leq \left( \frac{1}{2} + 2 \cdot \frac{1}{3} \right) + \left( \frac{1}{3} + \frac{1}{4} \right) + \left( \frac{1}{2} + 3 \cdot \frac{1}{3} \right) + 1 = \frac{17}{4} \textrm{.}
\]
Similarly, we also have
\[
s_2(C;(0,1)) \leq 1 + 4 \cdot \frac{1}{2}+ 4 \cdot \frac{1}{3} + \frac{1}{4} = \frac{55}{12} \textrm{.}
\]

For the second case (see Figure~\ref{AdjacentCaseB}), we assume that $D=\{ (0,0), (0,1), (2,0) \}$ is a subset of $C$. As in the previous case (with the aid of the figure), it can be concluded that
\[
s_2(C;(0,0)) \leq \frac{13}{3} \textrm{ and } s_2(C;(0,1)) \leq \frac{9}{2} \textrm{.}
\]
It should be noted that the results of this example will be later used in the proof of Lemma~\ref{HexR2ReceiveLemma1}.
\end{example}

\section{The proof of the lower bound} \label{HexLowerBound}

For the rest of the section, assume that $C$ is a $2$-identifying code in $G_H$. It can be shown that $s_2(\bc) \leq 5$ for all $\bc \in C$ (see the proof of Lemma~\ref{HexR2ReceivingLemma}). This provides another approach to obtain the lower bound $D(C) \geq 1/5$, which was previously shown in \cite{MSlbg}. In order to improve this lower bound, we need to consider the shares of codewords on average. Indeed, we can show that on average the share of a codeword is at most $19/4$. Therefore, as shown in Theorem~\ref{HexR2MainThm}, we prove that the density $D(C) \geq 4/19$.

The averaging process is done by introducing a \emph{shifting scheme} designed to even out the shares among the codewords of $C$. (Notice that the shifting scheme can also be understood as a discharging method.) The rules of the shifting scheme are defined in Section~\ref{SubsectionRules}. In Section~\ref{SubsectionMainTheorem}, we introduce three lemmas, which state the following results:
\begin{itemize}
\item If $s_2(\bc) > 19/4$ for some $\bc \in C$, then at least $s_2(\bc) - 19/4$ units of share is shifted from $\bc$ to other codewords. (Lemma~\ref{HexR2ReceivingLemma})
\item If share is shifted to a codeword $\bc \in C$, then $s_2(\bc) \leq 19/4$ and the codeword $\bc$ receives at most $19/4 - s_2(\bc)$ units of share. (Lemmas~\ref{HexR2ReceiveLemma2} and \ref{HexR2ReceiveLemma1})
\end{itemize}
In other words, after the shifting is done, the share of each codeword is at most $19/4$. Using this fact, we are able to prove the main theorem (Theorem~\ref{HexR2MainThm}) of the paper according to which $D(C) \geq 4/19$. Finally, in Section~\ref{SubsectionLemmasProofs}, we provide the proofs of the lemmas.

\subsection{The rules of the shifting scheme} \label{SubsectionRules}

The \emph{rules} of the shifting scheme are illustrated in Figure~\ref{HexR2RuleFigure}. Translations, rotations and reflections (over the line passing vertically through $\bu$) can be applied to each rule in such a way that the structure of the underlying graph $G_H$ is preserved. In the rules, share is shifted as follows:
\begin{itemize}
\item In the rules 1, 2, 4 and 7, we shift $1/4$ units of share
    from $\bu$ to $\bv$.
\item In the rule 3,  we shift $1/6$ and $1/12$ units of share
    from $\bu$ to $\bv$ and $\bv'$, respectively.
\item In the rule 5, we shift $1/6$ units of share from $\bu$ to
    $\bv$.
\item In the rules 6, 8, 9 and 10, we shift $1/12$ units of share
    from $\bu$ to $\bv$.
\end{itemize}
We also have the following modifications to the previous rules:
\begin{itemize}
\item If in the rules 1, 2 and 7 we have $\bu + (0,-1) \in C$, then we only shift $1/12$ units of share from $\bu$ to $\bv$ and denote these new rules (respectively) by 1.1, 2.1 and 7.1. Moreover, in the rule 1.1, we shift $1/12$ units of share to $\bv$ whether $(-3,2)$ belongs to $C$ or not.
\item If in the rule 1 we have $\bu + (-3,2) \in C$, then then we shift $1/4$ units of share from $\bu$ to $\bu + (-1,2)$ (no share is shifted to $\bv$) and denote this new rule by 1.2.
\item If in the rule 2 we have $\bu+(-3,1) \in C$, then we shift $1/6$ units of share from $\bu$ to $\bv$ and denote this new rule by 2.2.
\item If in the rule 2 we have $\bu + (1,2) \in C$, then we shift $1/12$ units of share from $\bu$ to $\bv$ and denote this new rule by 2.3.
\end{itemize}

\begin{figure}[htp]
\centering
\subfigure[Rule~1]{
\includegraphics[height=80pt]{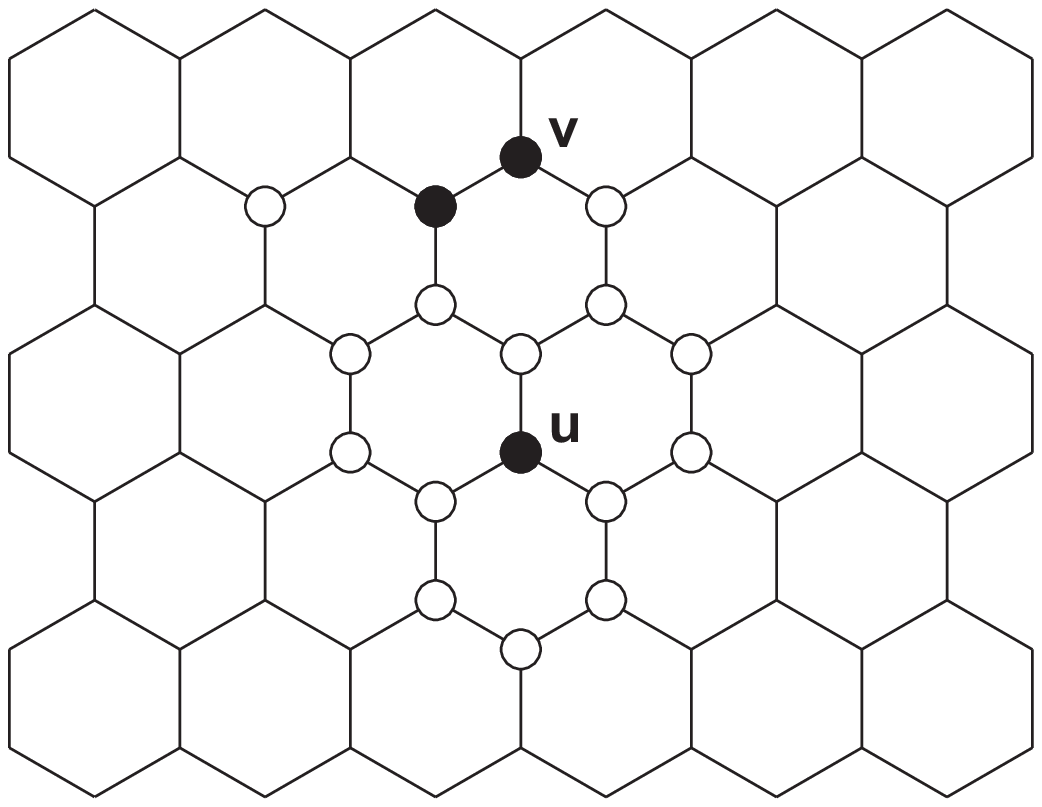}
\label{HexR2Rule1}
}
\subfigure[Rule~2]{
\includegraphics[height=80pt]{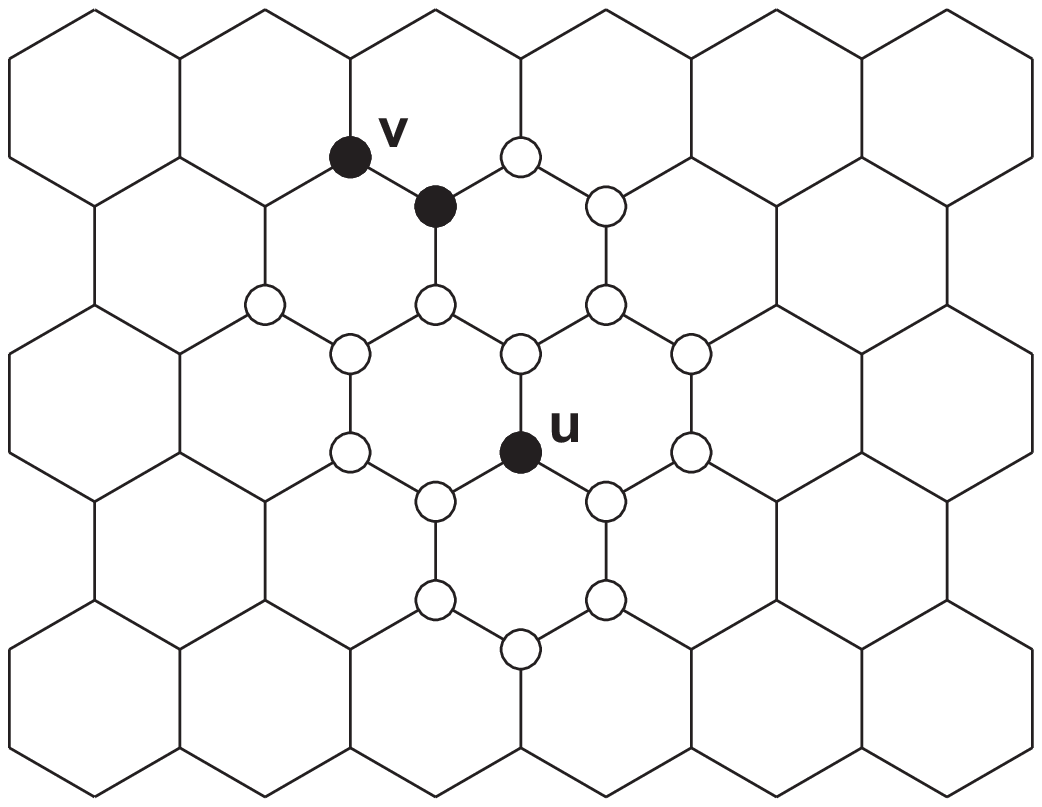}
\label{HexR2Rule2}
}
\subfigure[Rule~3]{
\includegraphics[height=80pt]{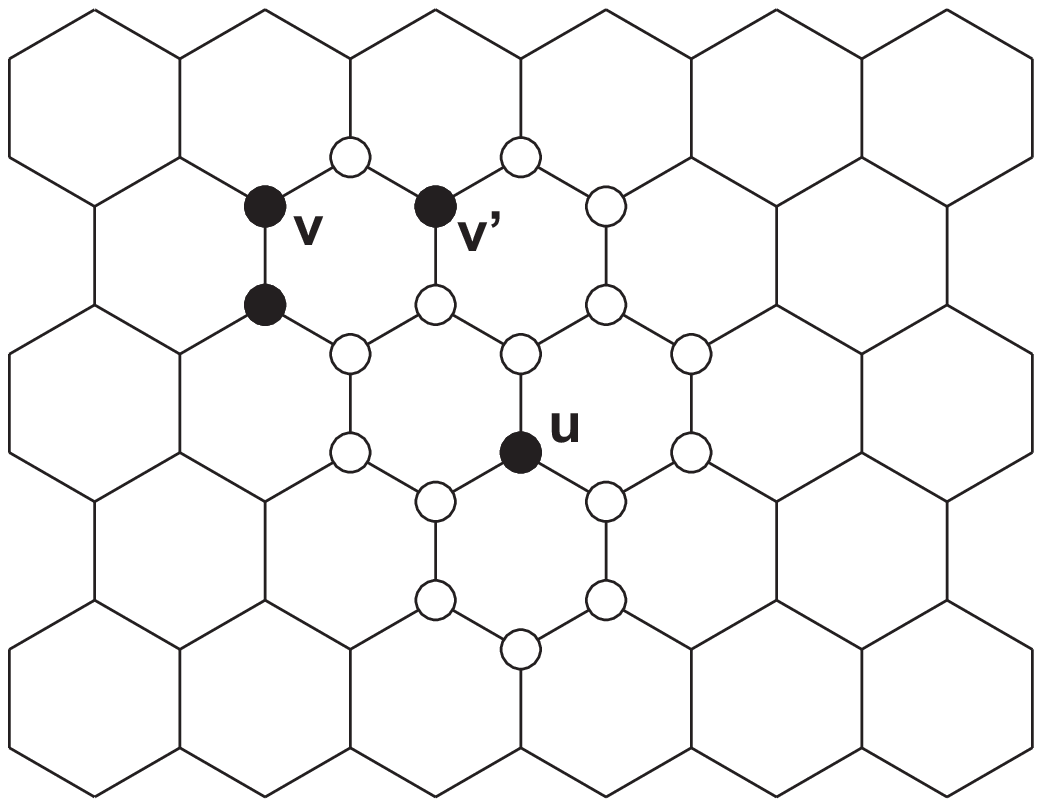}
\label{HexR2Rule3}
}
\subfigure[Rule~4]{
\includegraphics[height=80pt]{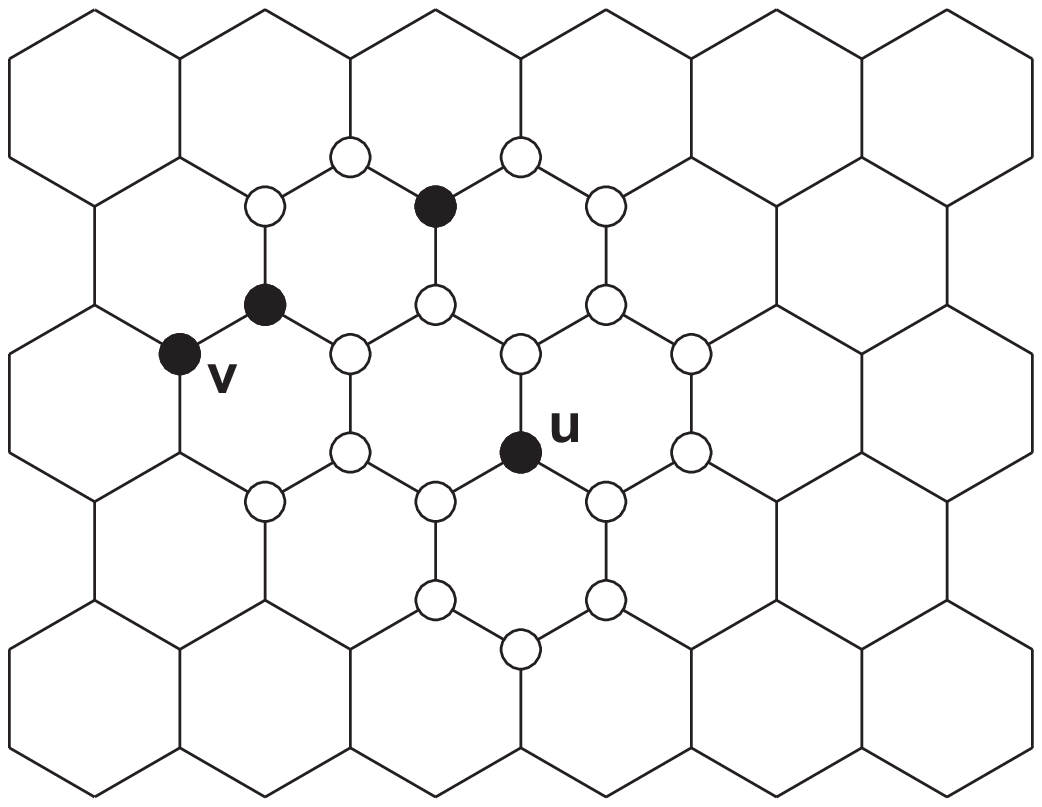}
\label{HexR2Rule4}
}
\subfigure[Rule~5]{
\includegraphics[height=80pt]{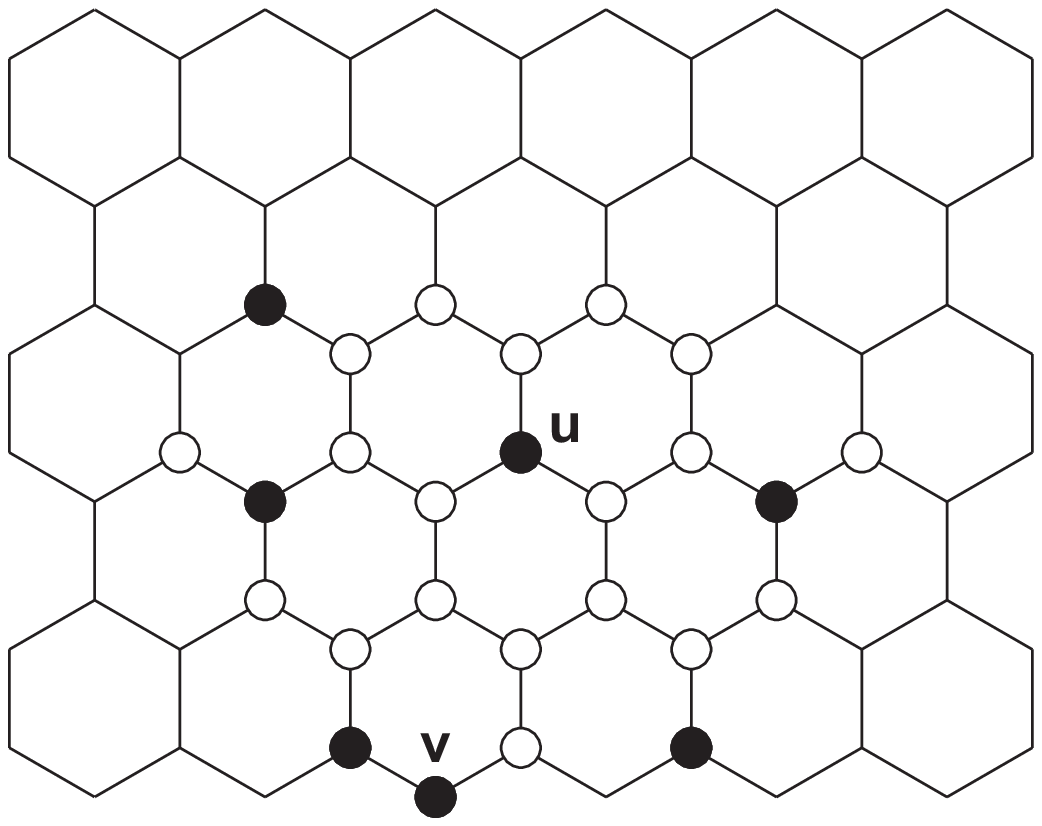}
\label{HexR2Rule5}
}
\subfigure[Rule~6]{
\includegraphics[height=80pt]{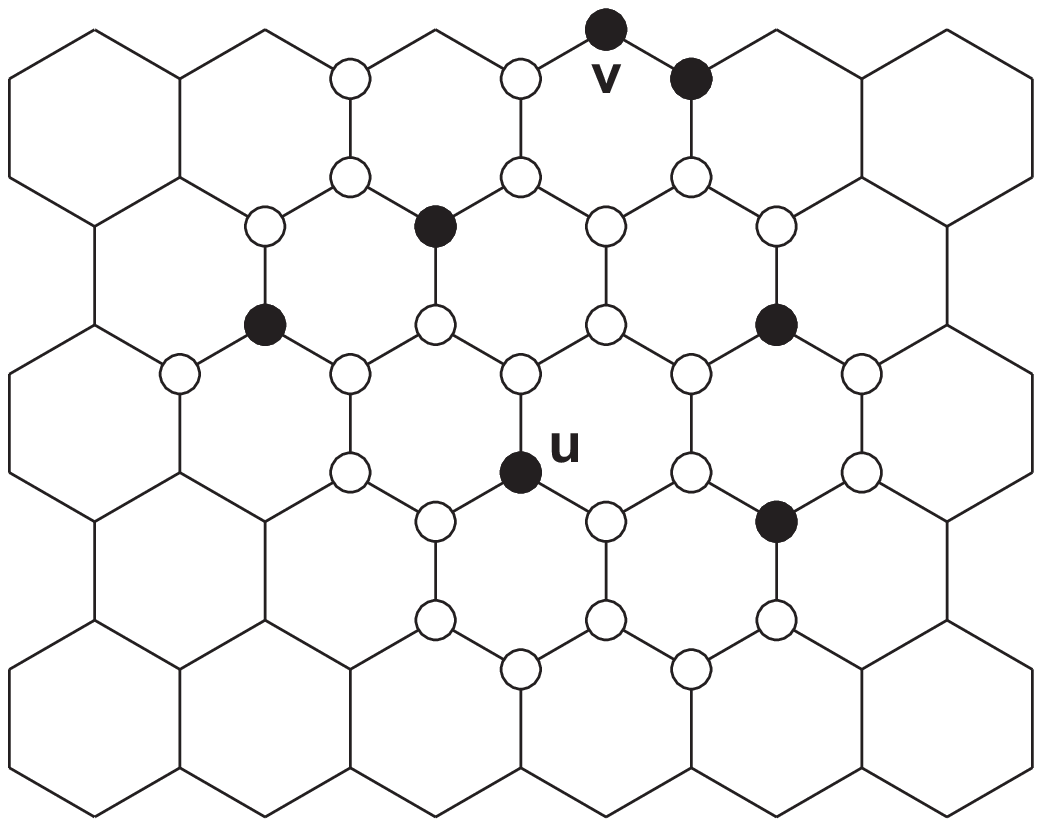}
\label{HexR2Rule6}
}
\subfigure[Rule~7]{
\includegraphics[height=80pt]{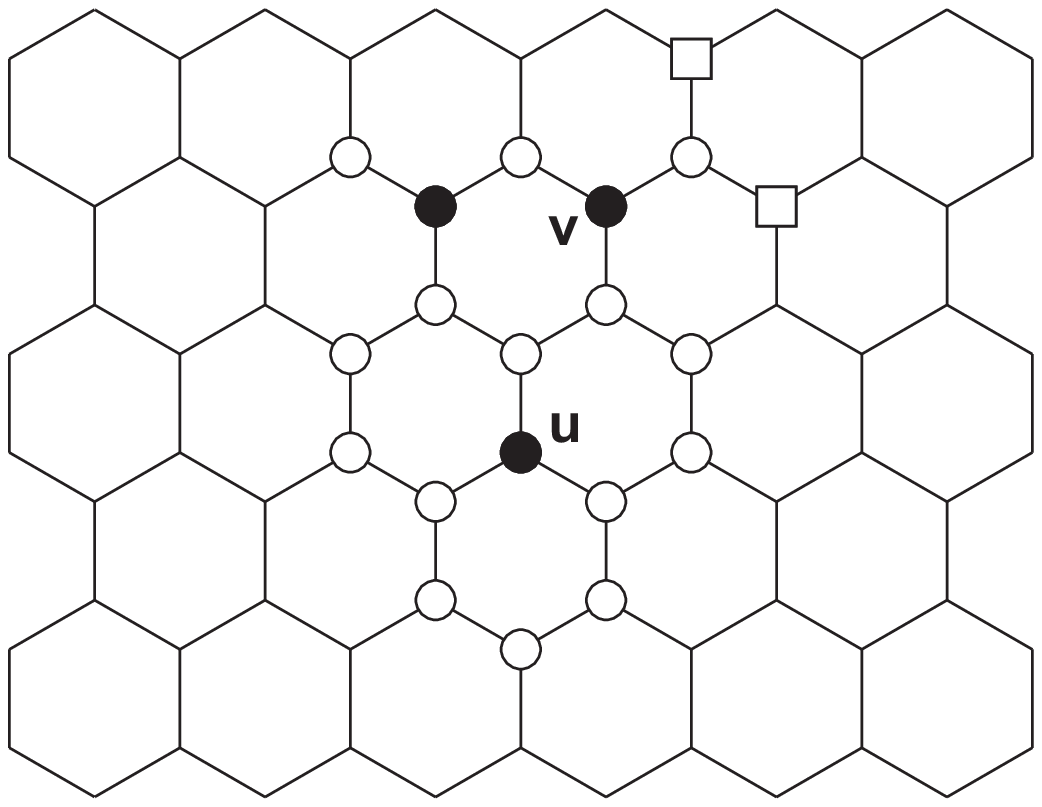}
\label{HexR2Rule7}
}
\subfigure[Rule~8]{
\includegraphics[height=80pt]{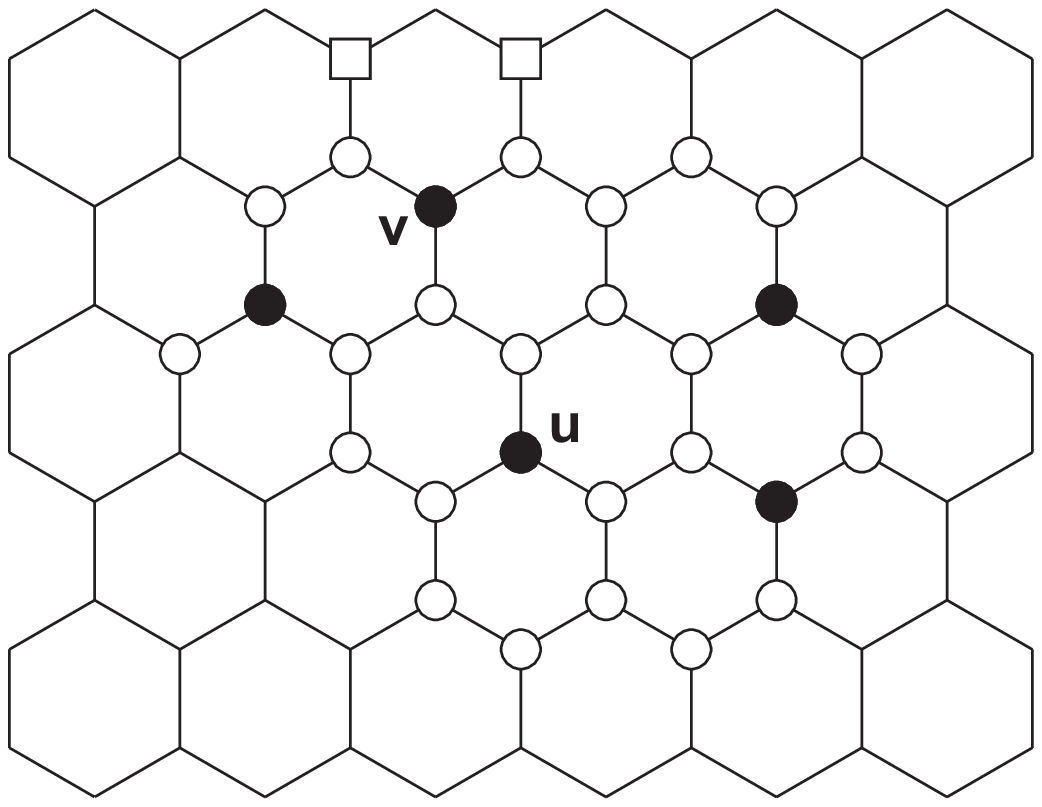}
\label{HexR2Rule8}
}
\subfigure[Rule~9]{
\includegraphics[height=80pt]{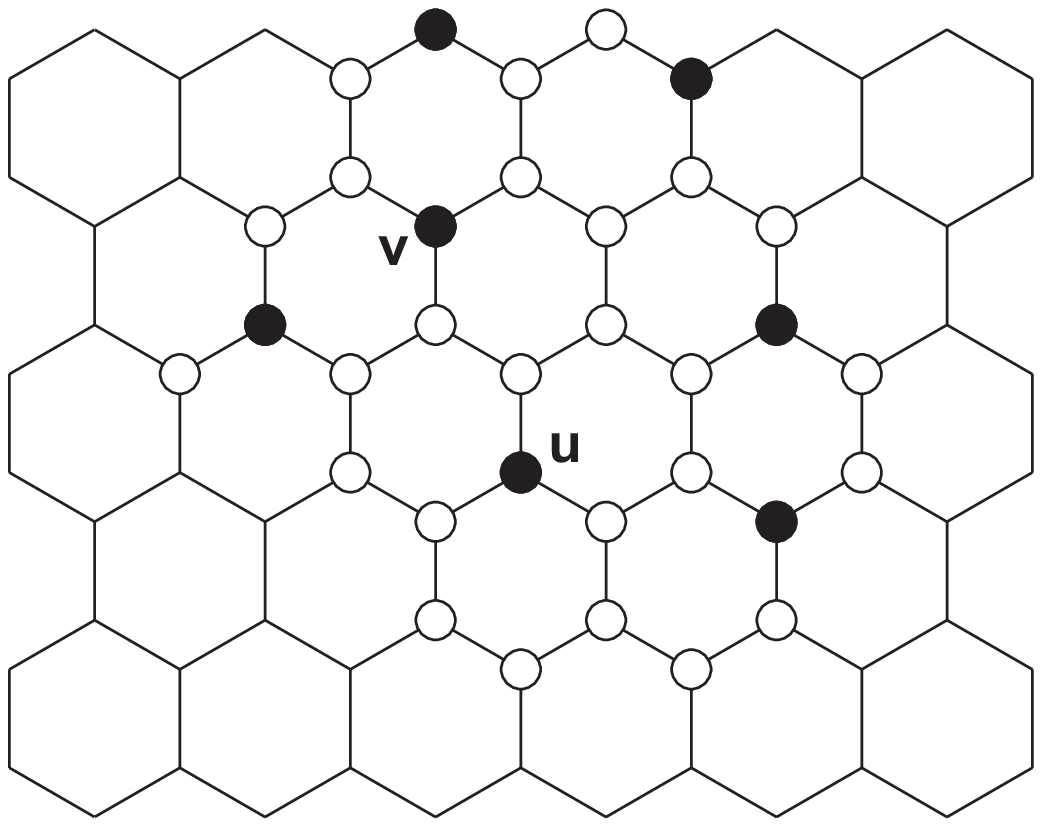}
\label{HexR2Rule9}
}
\subfigure[Rule~10]{
\includegraphics[height=80pt]{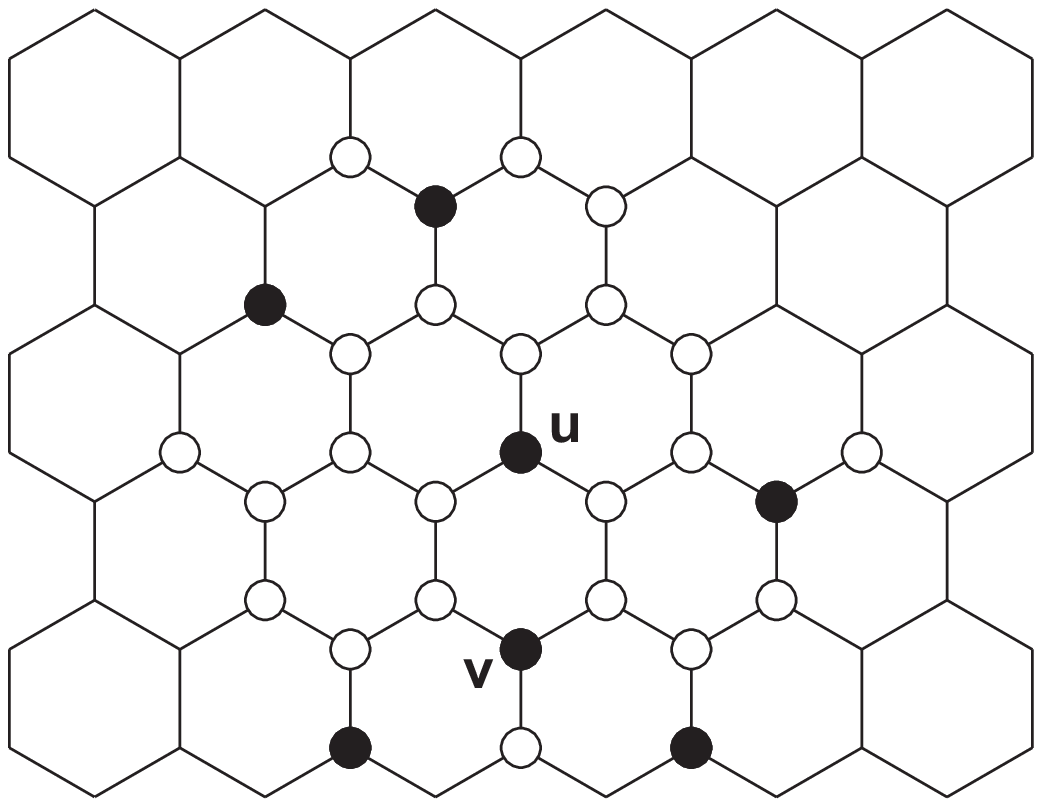}
\label{HexR2Rule10}
}
\caption{The rules of the shifting scheme illustrated. The black dots represent codewords and the white dots represent non-codewords. In the rules~7 and 8, at least one of the vertices marked with a white square is a codeword.}
\label{HexR2RuleFigure}
\end{figure}

The modified share of a codeword $\bc \in C$, which is obtained after the shifting scheme is applied, is denoted by $\ms_2(\bc)$. The use of the rules is illustrated in the following example.
\begin{figure}
\centering
\includegraphics[height=95pt]{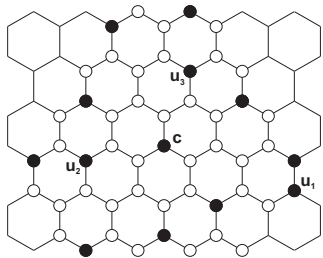}
\caption{An example of the use of the shifting rules.} 
\label{ShiftingRulesExampleFig}
\end{figure}
\begin{example}
Consider the codeword $\bc$ with the surroundings as illustrated in Figure~\ref{ShiftingRulesExampleFig}. The share of the codeword $\bc$ is equal to $5$. The rules 6, 8 and 9 apply to the codeword $\bc$ and according to the rules $1/12$ units of share is shifted from $\bc$ to $\bu_1$, $\bu_2$ and $\bu_3$, respectively. (Recall that reflections and rotations can be applied to the constellations in Figure~\ref{HexR2RuleFigure}.) Hence, after the shifting scheme is applied, we have $\ms_2(\bc) = 19/4$ for the modified share. In order to ensure that also $\ms_2(\bu_i) \leq 19/4$ for any $i = 1,2,3$, we refer to the proofs of Lemmas~\ref{HexR2ReceiveLemma2} and \ref{HexR2ReceiveLemma1}.
\end{example}

\subsection{The main theorem} \label{SubsectionMainTheorem}

The following three lemmas show that $\ms_2(\bc) \leq 19/4$ for all $\bc \in C$. The proofs of the lemmas are postponed to Section~\ref{SubsectionLemmasProofs}.

\begin{lemma} \label{HexR2ReceiveLemma2}
Let $\bc \in C$ be a codeword such that $\bc$ is not adjacent to another codeword and share is shifted to $\bc$ according to the previous rules. Then we have $\ms_2(\bc) \leq 19/4$.
\end{lemma}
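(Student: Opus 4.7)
My plan is to bound $\ms_2(\bc) = s_2(\bc) + R(\bc) - S(\bc)$, where $R(\bc)$ is the total share shifted into $\bc$ and $S(\bc)$ is the share shifted out. Since $\bc$ has no adjacent codeword, none of the rules whose sender configuration $\bu$ requires an adjacent codeword can apply \emph{from} $\bc$, so $S(\bc)$ will be easy to control (in fact most rules that fire from $\bc$ demand specific local non-codeword patterns that coexist with the hypotheses below). The real work is on the $R(\bc)$ side: I would first enumerate, for each of rules~1--10 and their variants 1.1, 1.2, 2.1--2.3, 7.1, the set of positions of the sender $\bu$ relative to $\bv=\bc$ (and relative to $\bv'=\bc$ in rule~3). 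Since the shifting diagrams are fixed up to translation, rotation and reflection, each rule contributes a finite list of possible senders, and for each such sender the complete local configuration around $\bu$ (which codewords and non-codewords appear) is dictated by the rule.

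Next, I would exploit the crucial constraint that $\bc$ has no neighbor in $C$. Each incoming rule forces a codeword $\bu$ at distance at most $3$ from $\bc$, and the forbidden neighbors of $\bc$ eliminate many rule/position combinations outright (these configurations would require an adjacent codeword of $\bc$). What remains is a short list of cases; these can be enumerated by how many rules actually fire into $\bc$ simultaneously and by the pattern of sending codewords around $\bc$.

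For each surviving case, the plan is to plug the known local configuration into Lemma~\ref{SimpleEstLemma} to bound $s_2(\bc)$ from above. The presence of a nearby codeword $\bu$ (together with the non-codewords forced by the rule) increases the sizes of many $I$-sets $I_2(D;u)$ for $u \in B_2(\bc)$, and this is precisely the mechanism that compensates for the share transferred in. Concretely, I expect that when a rule transferring $1/4$ fires into $\bc$, the configuration forces $s_2(\bc) \le 19/4 - 1/4 = 18/4$, and analogous compensations occur for rules transferring $1/6$ or $1/12$; when several rules fire simultaneously, the combined structural constraint lowers $s_2(\bc)$ by at least the aggregate inflow. The shifted-out contribution $S(\bc) \ge 0$ only helps.

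The main obstacle is the combinatorial case split: multiple rules may simultaneously push share into $\bc$ from several directions, and each combination produces a distinct configuration around $\bc$ whose $I$-sets must be catalogued. The special variants 1.1, 1.2, 2.1--2.3, 7.1 require particular attention because they change the amount transferred depending on additional codewords near $\bu$, and those additional codewords lie within distance at most $5$ of $\bc$, so they can both alter $R(\bc)$ and further constrain $s_2(\bc)$. I would manage this by fixing $\bu$ and the rule it fires, listing the configurations compatible with ``$\bc$ has no adjacent codeword,'' and running the Lemma~\ref{SimpleEstLemma} bound (as in Example~\ref{IllExampleAdjacentLemma}) for each, checking the arithmetic inequality $s_2(\bc) + R(\bc) \le 19/4$ in every case.
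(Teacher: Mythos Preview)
Your overall plan---enumerate which rules can fire into $\bc$, then use Lemma~\ref{SimpleEstLemma} to bound $s_2(\bc)$ in each case and check the arithmetic---is exactly the paper's approach. However, you are missing the two structural observations that make the paper's case analysis short, and without them your ``combinatorial case split'' would be far larger than necessary.

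First, the paper notes immediately that because $\bc$ has no adjacent codeword, only rules~3 (as receiver $\bv'$), 7, 7.1, 8, 9 and 10 can ever shift share \emph{into} $\bc$; in all other rules the receiver $\bv$ is by construction adjacent to a codeword. You gesture at this (``eliminate many rule/position combinations''), but the precise list is what cuts the problem down. Second---and this is the real engine of the proof---the paper shows that $\bc$ cannot receive share according to two \emph{different} rule types simultaneously: the codeword/non-codeword patterns forced on $B_2(\bc)$ by each of those six rules are mutually incompatible. The paper establishes this by treating the rules in a carefully chosen order (10, then 9, then 8, then 3, then 7/7.1), at each step ruling out all previously and subsequently handled rules. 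Your plan instead proposes to analyse arbitrary combinations of simultaneously firing rules and hope that ``the combined structural constraint lowers $s_2(\bc)$ by at least the aggregate inflow''; that assertion is never needed, because such combinations do not occur.

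Once you know only one rule type fires, what remains is to bound how many times that single type can fire (rule~10 at most three times, rule~3 at most twice, the others at most once) and to read off $s_2(\bc)$ from Lemma~\ref{SimpleEstLemma} using the codewords forced by the rule. For instance, under rule~10 one gets $s_2(\bc)\le 9/2$ and $\ms_2(\bc)\le 9/2+3\cdot 1/12=19/4$; under rule~7 one gets $s_2(\bc)\le 53/12$ and $\ms_2(\bc)\le 53/12+1/4<19/4$. Your expected compensation ``$1/4$ in forces $s_2(\bc)\le 18/4$'' is in the right spirit but not uniform across rules; the actual bounds depend on the specific configuration each rule imposes.
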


\begin{lemma} \label{HexR2ReceiveLemma1}
Let $\bc \in C$ be a codeword such that $\bc$ is adjacent to another codeword and share is shifted to $\bc$ according to the previous rules. Then we have $\ms_2(\bc) \leq 19/4$.
\end{lemma}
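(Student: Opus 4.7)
The plan is to combine the improved bound on the unmodified share $s_2(\bc)$ that becomes available when $\bc$ has an adjacent codeword with a careful accounting of the share that can be shifted into $\bc$. Since the modified share obeys $\ms_2(\bc) \le s_2(\bc) + R(\bc)$, where $R(\bc)$ denotes the total amount shifted to $\bc$ by the rules, it suffices to verify $s_2(\bc) + R(\bc) \le 19/4$ in every possible configuration.

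First I would fix $\bc = (0,0)$ and let $\bc' \in C$ be an adjacent codeword. Using the symmetries of $G_H$ (translations, rotations and the vertical reflection used in the description of the rules) I normalize so that $\bc' \in \{(0,1),(1,0)\}$; in particular the case $\bc'=(-1,0)$ is covered by reflection from $\bc'=(1,0)$. For each normalized pair I would apply Lemma~\ref{SimpleEstLemma} to the subset $D \subseteq C$ consisting of $\bc$, $\bc'$ and any further codewords in $B_2(\bc) \cup B_2(\bc')$. Example~\ref{IllExampleAdjacentLemma} already carries out the prototypical computations, giving bounds such as $s_2(\bc) \le 17/4$ (leaving headroom $1/2$) and $s_2(\bc) \le 13/3$ (leaving headroom $5/12$). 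Any additional codewords discovered near $\bc$ during the case analysis only decrease $s_2(\bc)$ and so enlarge the available headroom.

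Next, for each subcase I would enumerate the rules 1--10 together with their modifications 1.1, 1.2, 2.1, 2.2, 2.3 and 7.1, and identify which of them can place $\bc$ in the role of the recipient $\bv$ or $\bv'$. Each rule's diagram prescribes certain vertices around the donor $\bu$ to be codewords and others to be non-codewords, and the presence of $\bc'$ adjacent to $\bc$ forces many of these diagrams to fail; any extra codewords introduced by the subcase rule out still more. For the rules that survive I would tally $R(\bc)$ using the weights $1/4$, $1/6$ and $1/12$ specified in Section~\ref{SubsectionRules} and check the inequality
\[
s_2(\bc) + R(\bc) \le \frac{19}{4} \textrm{.}
\]

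The main obstacle is the combinatorial case analysis. The tightest configurations are those where two adjacent codewords alone already push $s_2(\bc)$ near the bound, such as $\{\bc,\bc'\}=\{(0,0),(0,1)\}$ with $s_2(\bc)\le 55/12$, which leaves only $1/6$ of headroom; in such cases even a single small-weight shift could come close to exhausting the slack, and simultaneous shifts from several rules must be ruled out. The delicate point is to show that the geometric conditions making a given rule apply to $\bc$ as recipient either exclude the other simultaneously applicable rules or introduce further nearby codewords which drop $s_2(\bc)$ enough to absorb the incoming share. I expect the proof to consist of a sequence of diagrams analogous to those of Example~\ref{IllExampleAdjacentLemma}, each accompanied by a short arithmetic verification.
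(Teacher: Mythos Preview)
Your plan follows the same overall strategy as the paper---bound $s_2(\bc)$ via Lemma~\ref{SimpleEstLemma}, bound the incoming share $R(\bc)$, and add---but it is missing the organizing device that makes the case analysis tractable. The paper does \emph{not} enumerate the rules one by one and argue pairwise incompatibility. Instead it first splits into three cases according to whether $\bc$ has one, two, or three adjacent codewords (your normalization $\bc'\in\{(0,1),(1,0)\}$ is redundant, since the three neighbours of a vertex are equivalent under the $2\pi/3$ rotation of the honeycomb, and it does not address the multi-neighbour cases). In the one-neighbour case with $\bc'=(0,1)$, the paper then observes that every donor other than the rule~1.2 donor must lie in one of two five-vertex sets
\[
S_1=\{(5,1),(4,1),(3,1),(3,2),(2,2)\},\qquad S_2=\{(-5,1),(-4,1),(-3,1),(-3,2),(-2,2)\},
\]
and proves the key sublemma that \emph{each $S_i$ contributes at most $1/4$ in total}. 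This is the structural insight that replaces the pairwise-exclusion argument you sketch: rather than ruling out simultaneous rules, one allows them but caps their combined contribution per set. The two- and three-neighbour cases introduce rotated copies $S_3,\dots,S_6$ and reuse the same $1/4$-per-set bound.

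The second ingredient you have not identified is how to \emph{generate} extra codewords near $\bc$. The paper uses the $2$-separation condition $I_2(\bc)\neq I_2(\bc')$ to force a codeword in $B_2(\bc)\,\triangle\,B_2(\bc')$; the location of this extra codeword then both tightens the Lemma~\ref{SimpleEstLemma} bound on $s_2(\bc)$ \emph{and} kills incoming share from one or more of the $S_i$. In particular, your worry about the $55/12$ case with only $1/6$ of headroom is handled exactly this way: when $(1,2)\in C$ forces $s_2(\bc)\le 55/12$, the presence of $(1,2)$ simultaneously blocks all of $S_1$ and most of $S_2$, leaving at most a single donor of weight $\le 1/6$. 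Without the $S_i$ grouping and the separation-forced codeword, your proposed rule-by-rule enumeration would likely founder on this and similar tight cases.
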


\begin{lemma} \label{HexR2ReceivingLemma}
Let $\bc \in C$ be a codeword such that no share is shifted to $\bc$
according to the previous rules. Then we have $\ms_2(\bc) \leq 19/4$.
\end{lemma}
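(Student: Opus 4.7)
Because no share is shifted into $\bc$, the modified share satisfies $\ms_2(\bc) = s_2(\bc) - \sigma_{\mathrm{out}}$, where $\sigma_{\mathrm{out}}$ denotes the total share shifted out of $\bc$ via the rules applied with $\bc$ in the position of $\bu$. So the goal reduces to proving $s_2(\bc) - \sigma_{\mathrm{out}} \leq 19/4$ in every admissible local configuration around $\bc$. Since $\sigma_{\mathrm{out}} \geq 0$, any configuration for which $s_2(\bc) \leq 19/4$ is already done; the difficulty lies entirely in the configurations with $s_2(\bc) \in (19/4, 5]$.

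The first step of my proof would be to establish the baseline bound $s_2(\bc) \leq 5$ alluded to in the paragraph preceding the statement. This is obtained by applying Lemma~\ref{SimpleEstLemma} to the singleton $D = \{\bc\}$: the ten vertices of $B_2(\bc)$ together with the separation requirements on the pairs of vertices lying inside $B_2(\bc)$, and on the pairs straddling its boundary, force certain identifying sets to have size at least two and force the presence of further codewords near $\bc$. The worst admissible configuration saturates at $s_2(\bc) = 5$, which reproduces the Martin--Stanton bound $D(C) \geq 1/5$ but is too weak for our purposes.

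The second step is the main case analysis. Since $s_2(\bc) > 19/4 = 5 - 1/4$, the deficit from the maximum is small, and this forces the neighborhood of $\bc$ to be very sparse: most vertices of $B_2(\bc)$ must have singleton or very small $I$-sets, which in turn pins down (up to the symmetries of $G_H$) the possible codeword positions inside and just outside $B_2(\bc)$. For each such sparse configuration I would (i) compute the exact upper bound on $s_2(\bc)$ via Lemma~\ref{SimpleEstLemma}, (ii) identify every rule among 1--10 and their modifications 1.1, 1.2, 2.1, 2.2, 2.3, 7.1 whose $\bu$-pattern matches $\bc$ in the given configuration, and (iii) verify that the aggregate outgoing shift $\sigma_{\mathrm{out}}$ is at least $s_2(\bc) - 19/4$. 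The rules of Figure~\ref{HexR2RuleFigure} are designed so that each sparse high-share constellation triggers outgoing shifts of the appropriate magnitudes; the modified rules handle boundary cases in which additional codewords present near $\bu$ (for example $\bu + (0,-1)$, $\bu + (-3,2)$, $\bu + (-3,1)$, or $\bu + (1,2)$ lying in $C$) change either the share of $\bc$ or the situation at the receiving vertex.

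The main obstacle is the combinatorial bookkeeping rather than any single conceptual step. The high-share configurations come in many symmetry classes, and one must verify both exhaustiveness (no sparse pattern with $s_2(\bc) > 19/4$ escapes the list) and that in each case the correct rule variant fires with the correct modifier conditions so that enough share is actually shipped out. A practical way to organize this is to branch on the number of codewords of $C$ inside $B_2(\bc)$, then on their positions modulo the dihedral symmetries of the brick-wall picture; for each branch the computation of $s_2(\bc)$ via Lemma~\ref{SimpleEstLemma} and the identification of the applicable rule(s) is mechanical, as illustrated for the adjacent-codeword cases in Example~\ref{IllExampleAdjacentLemma}.
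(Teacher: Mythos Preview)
Your high-level strategy matches the paper's: one shows that whenever $s_2(\bc)>19/4$, the rules fire at $\bc$ in the $\bu$-role to ship out at least $s_2(\bc)-19/4$. However, what you have written is an outline of intent rather than a proof; the entire content of the lemma lies in the case analysis you defer, and the organizing principle you suggest (branching on $|C\cap B_2(\bc)|$ and then on positions modulo symmetry) is not the one that makes the analysis tractable.

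In the paper, the case $|I_2(\bc)|\ge 2$ is disposed of quickly using Lemma~\ref{SimpleEstLemma} together with the separation of $\bc$ from a specific neighbour; no outgoing shift is needed there. All the real work occurs when $I_2(\bc)=\{\bc\}$, and the decisive branching is \emph{not} on vertices of $B_2(\bc)$ but on the three vertices $(-2,1)$, $(2,1)$, $(0,-1)$ at distance~$3$ from $\bc$. When at least two of these lie in $C$ one gets $s_2(\bc)\le 14/3$ outright; when exactly one does, $s_2(\bc)\le 29/6$ and a single outgoing $1/12$ (via rule~1.1, 2.1, 7.1 or 10) suffices; when none do, the separation of $\bc$ from $(0,1)$, $(-1,0)$, $(1,0)$ forces a codeword in each of the three two-element sets $A'_1,A'_2,A'_3$ just outside $B_2(\bc)$, which is what gives the baseline $s_2(\bc)\le 5$ and, after a further reduction to two explicit configurations (Figure~\ref{HexR2ReceivingIllustrated}), pins down exactly which of rules~1--10 fire. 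Your proposal does not surface this distance-$3$ branching, the auxiliary sets $A_i,A'_i$, or the endgame configurations, and without them the ``mechanical'' verification you describe has no starting point. The approach is right; the missing idea is the specific case structure that makes it finite and checkable.
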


As stated in the previous lemmas, we have  $\ms_2(\bc) \leq 19/4$ for any $\bc \in C$. Now we are ready to prove the main theorem of the
paper.
\begin{thm} \label{HexR2MainThm}
If $C$ is a $2$-identifying code in the hexagonal grid $G_H$, then
the density
\[
D(C) \geq \frac{4}{19} \textrm{.}
\]
\end{thm}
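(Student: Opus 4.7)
The plan is to combine the three share lemmas (which together guarantee $\ms_2(\bc) \le 19/4$ for every $\bc \in C$) with a double-counting argument on the share, carried out in the finite boxes $Q_n$ and then passed to the limit. The averaging observation sketched after Lemma~\ref{SimpleEstLemma} is essentially the finite version of what I need; the only extra work is handling the boundary when the grid is infinite.

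First, I would fix $n$ and compute $|Q_n|$ in two ways. Since $C$ is $2$-identifying, $I_2(C;u)=B_2(u)\cap C$ is non-empty for every $u \in V_H$, so
\[
\sum_{\bc \in B_2(u) \cap C} \frac{1}{|I_2(C;u)|} = 1 \textrm{.}
\]
Summing this identity over $u \in Q_n$ and interchanging the order of summation yields
\[
|Q_n| \;=\; \sum_{\bc \in C} \sum_{u \in Q_n \cap B_2(\bc)} \frac{1}{|I_2(C;u)|} \;\le\; \sum_{\bc \in C \cap Q_n'} s_2(\bc) \textrm{,}
\]
where $Q_n' := Q_{n+R}$ for a fixed constant $R$ chosen so that $B_2(\bc) \cap Q_n \ne \emptyset$ forces $\bc \in Q_n'$ (the value $R=2$ suffices, since each edge of $G_H$ changes a coordinate by $\pm 1$).

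Next I would bring in the shifting scheme. Inspecting Figure~\ref{HexR2RuleFigure}, every rule transfers share between two codewords whose mutual graph distance is bounded by some absolute constant $K$. Setting $Q_n'' := Q_{n+R+K}$, any shift originating from a codeword of $Q_n'$ lands on a codeword of $Q_n''$, so
\[
\sum_{\bc \in Q_n' \cap C} s_2(\bc) \;=\; \sum_{\bc \in Q_n' \cap C} \ms_2(\bc) \;+\; \Delta_n \textrm{,}
\]
where $\Delta_n$ records those rule applications whose two endpoints straddle the boundary of $Q_n'$. Each codeword is involved in only finitely many rule applications of bounded magnitude, and the boundary annulus $Q_n'' \setminus Q_n'$ contains $O(n)$ codewords, so $|\Delta_n| = O(n)$. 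Combining with $\ms_2(\bc) \le 19/4$ from Lemmas~\ref{HexR2ReceiveLemma2}, \ref{HexR2ReceiveLemma1} and \ref{HexR2ReceivingLemma}, I obtain
\[
|Q_n| \;\le\; \frac{19}{4}\,|C \cap Q_n''| \;+\; O(n) \textrm{.}
\]
Dividing by $|Q_n''|$, using $|Q_n|/|Q_n''| \to 1$ and $|Q_n''| = \Theta(n^2)$ so that $O(n)/|Q_n''| \to 0$, and then taking $\limsup_{n\to\infty}$, gives $1 \le (19/4)\,D(C)$, i.e., $D(C) \ge 4/19$.

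The main obstacle I anticipate is the bookkeeping of $\Delta_n$: one has to check that each of the ten rules (together with the modifications 1.1, 1.2, 2.1, 2.2, 2.3 and 7.1 of Section~\ref{SubsectionRules}) is genuinely of bounded range, so that share cannot leak arbitrarily far out of $Q_n'$. Once this finiteness is observed from the figures, the rest is the standard averaging/limiting step above, and all of the combinatorial work is already encapsulated in the three share lemmas whose proofs appear in Section~\ref{SubsectionLemmasProofs}.
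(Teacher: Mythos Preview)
Your proposal is correct and follows essentially the same route as the paper: double-count via the share to get $|Q_{n-2}|\le\sum_{\bc\in C\cap Q_n}s_2(\bc)$ (the paper's version of your first inequality), then replace $s_2$ by $\ms_2$ at the cost of a boundary term coming from the bounded range of the shifting rules, apply $\ms_2(\bc)\le 19/4$ from the three lemmas, and pass to the limit. The only cosmetic difference is that the paper bounds the boundary leak explicitly by $\tfrac{19}{4}\,|Q_{n+6}\setminus Q_n|$ (using that any codeword can \emph{receive} at most $19/4$ units, a consequence of Lemmas~\ref{HexR2ReceiveLemma2} and~\ref{HexR2ReceiveLemma1}), whereas you wrap the same effect into an $O(n)$ term; both vanish after dividing by $|Q_n|$.
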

\begin{proof}
Assume that $C$ is a $2$-identifying code in $G_H$. Since each vertex $\bu \in Q_{n-2}$ with $|I_2(\bu)|=i$ contributes the summand $1/i$
to $s_2(\bc)$ for each of the $i$ codewords $\bc \in B_2(\bu)$, we have
\begin{equation} \label{HexR2MainEq1}
\sum_{\bc \in C \cap Q_n} s_2(\bc) \geq |Q_{n-2}| \textrm{.}
\end{equation}
Furthermore, we have
\begin{equation} \label{HexR2MainEq2}
\sum_{\bc \in C \cap Q_n} s_2(\bc) \leq \sum_{\bc \in C \cap Q_n} \ms_2(\bc) + \frac{19}{4} |Q_{n+6} \setminus Q_n| \textrm{,}
\end{equation}
because shifting shares inside $Q_n$ does not affect the sum and each codeword in $Q_{n+6} \setminus Q_n$ can receive at most $19/4$ units
of share (by Lemmas~\ref{HexR2ReceiveLemma2} and \ref{HexR2ReceiveLemma1}). Notice also that
codewords in $Q_n$ cannot shift share to codewords outside $Q_{n+6}$. Therefore, combining the equations~\eqref{HexR2MainEq1} and
\eqref{HexR2MainEq2} with the fact that $\ms_2(\bc) \leq 19/4$ for any
$\bc \in C$, we obtain
\[
\frac{|C \cap Q_n|}{|Q_n|} \geq \frac{4}{19} \cdot \frac{|Q_{n-2}|}{|Q_n|} - \frac{|Q_{n+6} \setminus Q_n|}{|Q_n|} \textrm{.}
\]
Since $|Q_k| = (2k+1)^2$ for any positive integer $k$, it is
straightforward to conclude from the previous inequality that the
density $D(C) \geq 4/19$.
\end{proof}

\subsection{The proofs of the lemmas} \label{SubsectionLemmasProofs}

In what follows, we provide the proofs of Lemmas~\ref{HexR2ReceiveLemma2}, \ref{HexR2ReceiveLemma1} and \ref{HexR2ReceivingLemma}.

\begin{proof}[Proof of Lemma~\ref{HexR2ReceiveLemma2}]
Notice first that share can be shifted to the codeword $\bc$ only according to the rules 3, 7, 7.1, 8, 9 and 10 since $\bc$ is not adjacent to another codeword. The main idea of the following proof is to show that $\bc$ cannot receive share according to two different rules. In each case, this observation then quite straightforwardly implies the claim.

Assume first that $\bc$ receives share according to the rule 10. Without loss of generality, we may assume that the rule is applied as in Figure~\ref{HexR2Rule10} (when $\bc = \bv$). Now the vertices $\bc + (-2,-1)$, $\bc + (0,1)$ and $\bc + (2,-1)$ belong to the code $C$. Since this is not the case with the other rules (see Figure~\ref{HexR2RuleFigure}), they cannot be applied to $\bc$.  
Moreover, choosing $D = \{ \bc, \bc + (-2,-1), \bc + (0,1), \bc + (2,-1) \}$ in Lemma~\ref{SimpleEstLemma}, we have $s_2(\bc) \leq 9/2$. Thus, since according to the rule 10 share can be shifted to $\bc$ only from $\bc + (-2,-1)$, $\bc + (0,1)$ and $\bc + (2,-1)$ and at most once for each of these codewords, we obtain that $\ms_2(\bc) \leq s_2(\bc) + 3\cdot1/12 \leq 19/4$.

Assume then that share is shifted to $\bc$ according to the rule 9. First of all, by the previous paragraph, the rule 10 cannot be applied to $\bc$. Since now we have $I_2(\bc) = \{ \bc \}$, it is immediate that $\bc$ cannot receive share according to the rules 3, 7, 7.1 or 8. Moreover, it is easy to see that the rule 9 can be applied only once. Therefore, since we have $s_2(\bc) \leq 14/3$ by Lemma~\ref{SimpleEstLemma}, we obtain that $\ms_2(\bc) \leq s_2(\bc) + 1/12 \leq 19/4$.

Assume that $\bc$ receives share according to the rule 8 and that the rule is used as in Figure~\ref{HexR2Rule8}. Now, if the rule 7 or 7.1 was used, then there would exist two codewords in $B_2(\bc)$ such that the distance between them is equal to $4$. By the constellation of the rule 8, this is impossible. Let us then show that neither the rule 3 can be used. Assume to the contrary that share is shifted to $\bc$ according to the rule 3. Since there is a pair of adjacent codewords in the constellation of the rule 3, the vertex $\bc + (0,1)$ belongs to $C$. Furthermore, we have either $\bc + (-1,1) \in C$ or $\bc + (1,1) \in C$ (but not both). If $\bc + (-1,1) \in C$, then we have a contradiction since share is shifted from $\bc + (3,0)$, which is not a codeword in the constellation of the rule 8. On the other hand, if $\bc + (1,1) \in C$, then share is received from $\bc + (-3,0)$. This again leads to a contradiction since $\bc + (-2,-1) \in C$. In conclusion, only the rule 8 can be applied to $\bc$. Moreover, the rule 8 can be used at most once. If $\bc + (-1,1) \in C$ or $\bc + (1,1) \in C$, then $s_2(\bc) \leq 55/12$ or $s_2(\bc) \leq 23/6$ by Lemma~\ref{SimpleEstLemma}, respectively. Hence, we have $\ms_2(\bc) \leq s_2(\bc) + 1/12 \leq 56/12 \leq 19/4$.

Assume that $\bc$ receives share according to the rule 3 and that the rule is used as in Figure~\ref{HexR2Rule3} (when $\bc = \bv'$). By the previous considerations, we know that share cannot be shifted to $\bc$ according to the rules 8, 9 and 10. Let us then show that neither the the rules 7 or 7.1 can be applied to $\bc$. Assume to the contrary that $\bc$ receives share according to the rule 7. Now $\bc$ can receive share only from the vertices $\bc + (-1,-2)$, $\bc + (1,-2)$, $\bc + (3,0)$, $\bc + (2,1)$, $\bc + (-3,0)$ and $\bc + (-2,1)$. Now we have the following observations:
\begin{itemize}
\item Since $\bc + (-1,-2) \in C$ share cannot be shifted from $\bc + (-1,-2)$ and $\bc + (1,-2)$.
\item Since $\bc + (-2,0) \in C$ share cannot be shifted from $\bc + (-3,0)$ and $\bc + (-2,1)$.
\item Since $\bc + (-1,-1) \notin C$ and $\bc + (1,-1) \notin C$ share cannot be shifted from $\bc + (3,0)$ and $\bc + (2,1)$.
\end{itemize}
A contradiction now follows from these facts. Hence, only the rule 3 can be applied to $\bc$. Moreover, it is easy to see that share can be shifted to $\bc$ at most twice according to the rule 3. Therefore, since we have $s_2(\bc) \leq  9/2$ by Lemma~\ref{SimpleEstLemma}, we obtain that $\ms_2(\bc) \leq s_2(\bc) + 2 \cdot 1/12 \leq 19/4$.

Finally, assume that $\bc$ receives share according to the rule 7 or 7.1 and that the rule is used as in Figure~\ref{HexR2Rule7}. As shown above, other rules cannot be applied to $\bc$. Moreover, the rules 7 and 7.1 can be applied to $\bc$ only once. Furthermore, if $\bc+(2,0) \in C$ or $\bc+(1,1) \in C$, then we have $s_2(\bc) \leq 23/6$ or $s_2(\bc) \leq 53/12$ by Lemma~\ref{SimpleEstLemma}, respectively. Thus, we have $\ms_2(\bc) \leq s_2(\bc) + 1/4 \leq 14/3 \leq 19/4$.
\end{proof}

\begin{figure}
\centering
\subfigure[The first case]{
\includegraphics[height=90pt]{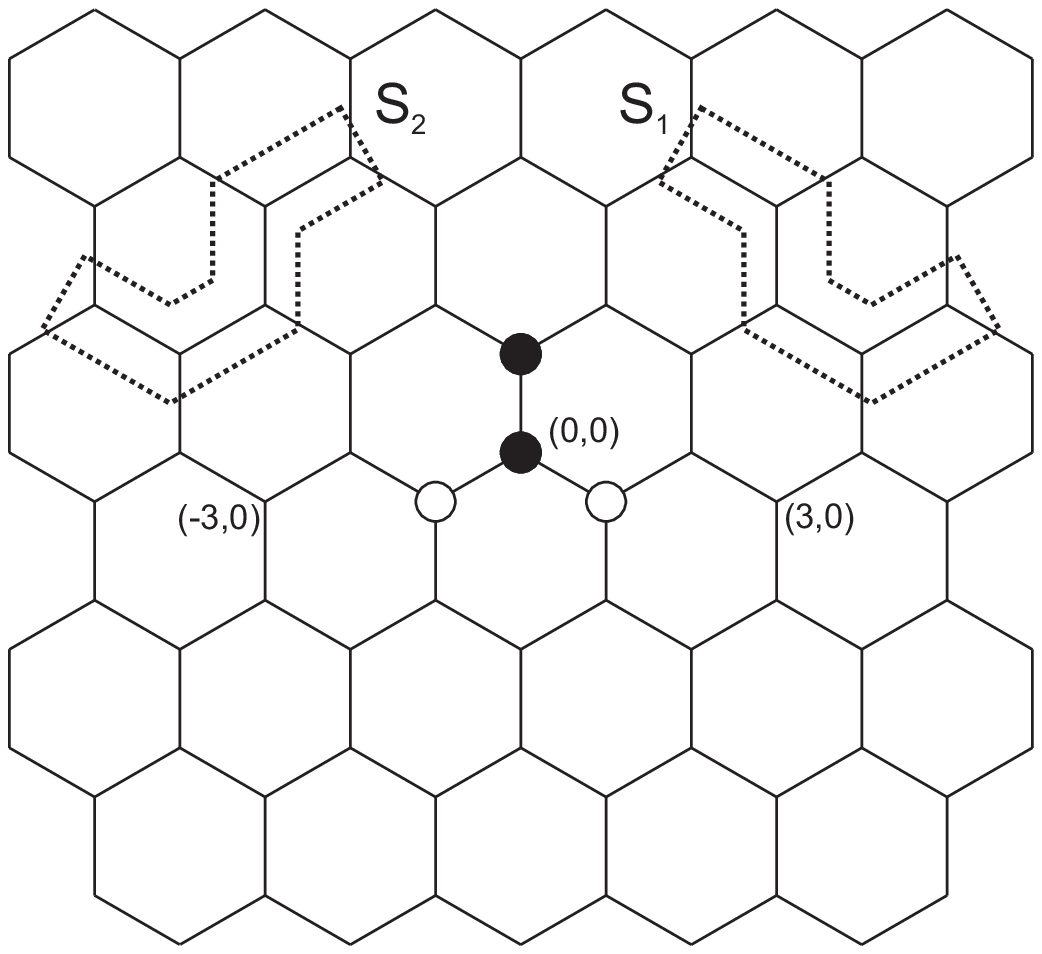}
\label{Lemma1CaseA}
}
\subfigure[The second case]{
\includegraphics[height=90pt]{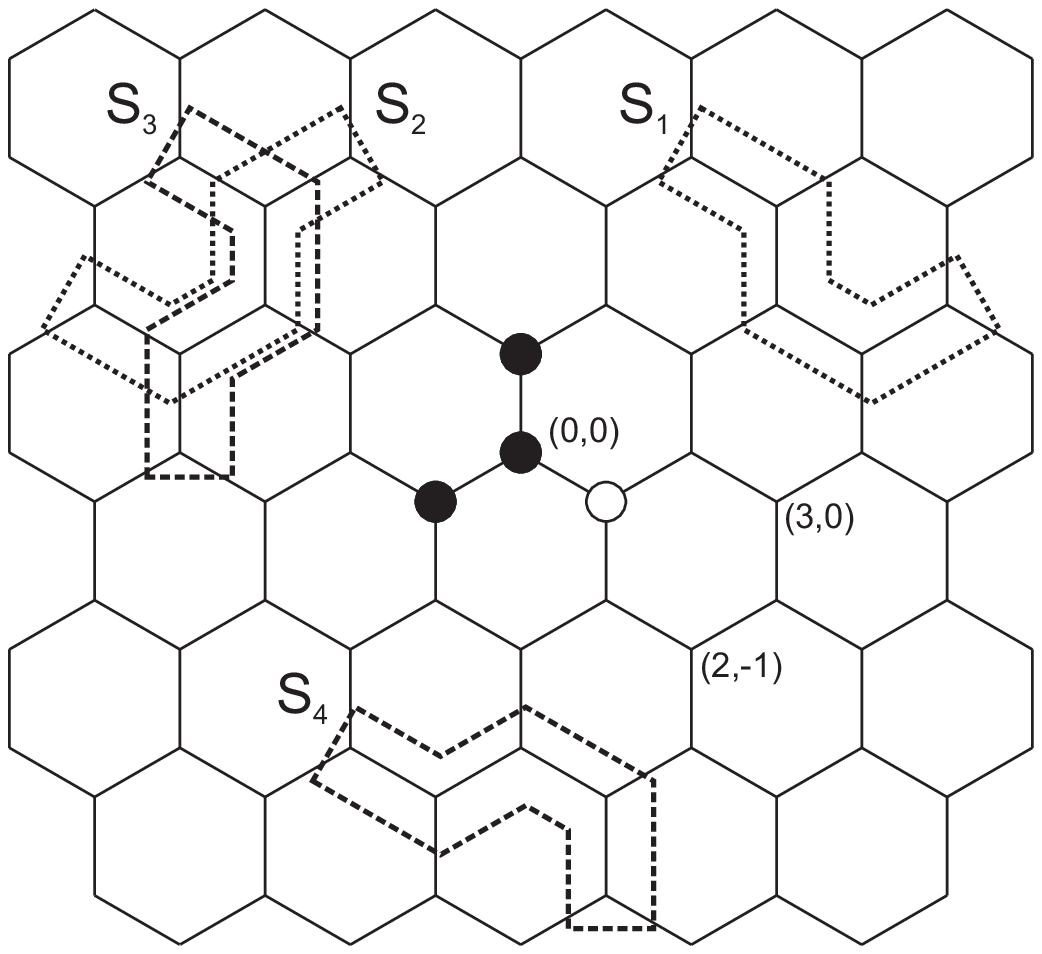}
\label{Lemma1CaseB}
}
\subfigure[The third case]{
\includegraphics[height=90pt]{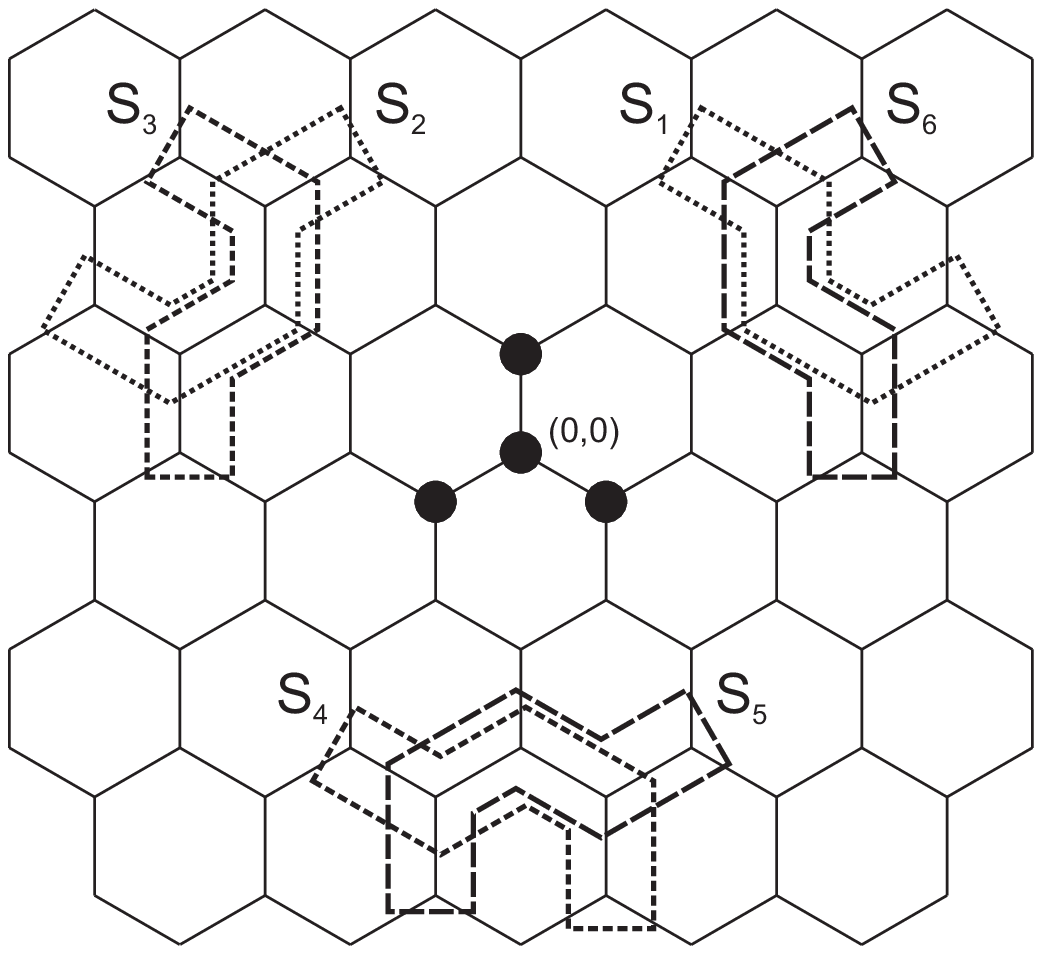}
\label{Lemma1CaseC}
}
\caption{The cases of the proof of Lemma~\ref{HexR2ReceiveLemma1} illustrated.} 
\label{Lemma1Cases}
\end{figure}

\begin{proof}[Proof of Lemma~\ref{HexR2ReceiveLemma1}]
Since $\bc$ is adjacent to another codeword, it is immediate that $\bc$ can receive share only according to the rules 1, 1.1, 1.2, 2, 2.1, 2.2, 2.3, 3, 4, 5 and 6. 
The proof of the lemma is now divided into three cases depending on the number of codewords adjacent to $\bc$.

\medskip

1) Assume first that $\bc$ is adjacent to exactly one codeword. Without loss of generality, we may assume that $\bc = (0,0)$ and the adjacent codeword is $(0,1)$. (Notice that $(-1,0) \notin C$ and $(1,0) \notin C$.) Now the only possibilities for $\bc$ to receive share is from the vertices $(-3,0)$ or $(3,0)$ (the rule 1.2) and from the vertices that belong to $S_1 = \{ (5,1), (4,1), (3,1), (3,2), (2,2)\}$ and $S_2 = \{ (-5,1), (-4,1), (-3,1), (-3,2), (-2,2)\}$. These observations are illustrated in Figure~\ref{Lemma1CaseA}.

Consider then more closely the set $S_1$. In what follows, we show that at most $1/4$ units of share is shifted from the vertices of $S_1$ to $\bc$. Notice that if share is shifted only from one vertex of $S_1$, then we are immediately done. Observe then that if $\bu \in S_1$ shifts share to $\bc$ according to the rules 1--6 or their modifications, then we have $I_2(\bu) = \{ \bu \}$. Therefore, if two vertices of $S_1$ shift share, then one of these vertices is $(4,1)$ or $(5,1)$.
Assume that share is shifted from $(4,1)$ according to the rule 5 (see Figure~\ref{HexR2Rule5}). Now we have $C \cap S_1 = \{ (2,2), (4,1) \}$ and $(4,3) \in C$. Therefore, since at most $1/12$ units of share is shifted from $(4,1)$ according to the rule 2.1, we obtain that no more than $1/6+1/12 = 1/4$ units of share can be shifted from $S_1$ to $\bc$. Similarly, if share is shifted from $(5,1)$ according to the rule 6, then it can be shown that $\bc$ receives at most $1/4$ units of share from $S_1$. In conclusion, at most $1/4$ units of share is shifted from the vertices of $S_1$ to $\bc$. Analogously, this statement also holds for the vertices of $S_2$.

Assume that the rule 1.2 is used. (Clearly, this rule can be used only once.) Without loss of generality, we may assume that $(-3,0) \in C$ and $(1,-1) \in C$. Therefore, choosing $D= \{ \bc, (0,1), (-3,0), (1,-1) \}$ in Lemma~\ref{SimpleEstLemma}, we obtain that $s_2(\bc) \leq 15/4$. Thus, since the codewords in each of the sets $S_1$ and $S_2$ can shift at most $1/4$ units of share to $\bc$, we have $\ms_2(\bc) \leq s_2(\bc) + 3 \cdot 1/4 \leq 9/2$. Assume then that the rule 1.2 cannot be applied to $\bc$. Since $\bc$ and $(0,1)$ are $2$-separated by $C$, there exists at least one codeword in the symmetric difference $B_2(\bc) \, \triangle \, B_2(0,1)$. Thus, without loss of generality, we may assume that $(1,-1) \in C$, $(1,2) \in C$, $(2,0) \in C$ or $(2,1) \in C$. The first part of the proof is then concluded by the following four cases:
\begin{itemize}
\item Assume that $(1,-1) \in C$. Now, by the first case of Example~\ref{IllExampleAdjacentLemma}, we know that $s_2(\bc) \leq 17/4$. Therefore, since share is shifted to $\bc$ only from the vertices of the sets $S_1$ and $S_2$, we have $\ms_2(\bc) \leq s_2(\bc) + 2 \cdot 1/4 \leq 19/4$.
\item Assume that $(1,2) \in C$. It is straightforward to verify that share can be shifted to $\bc$ only from $(-3,1)$, $(-2,2)$, $(-3,2)$, $(-4,1)$ and $(-5,1)$ according to the rules 1.1, 2.3, 3, 5 and 6, respectively. Moreover, it is easy to see that at most one of these rules can be used (and only once). Thus, the codeword $\bc$ receives at most $1/6$ units of share. Hence, we have $\ms_2(\bc) \leq s_2(\bc) + 1/6 \leq 19/4$ since $s_2(\bc) \leq 55/12$ by the first case of Example~\ref{IllExampleAdjacentLemma} (and obvious symmetrical argument).
\item Assume that $(2,0) \in C$. Consider then more closely the vertices of $S_1$. Then it is easy to conclude that the vertices $(5,1)$, $(4,1)$ and $(3,1)$ cannot shift share to $\bc$. Hence, only either $(3,2)$ according to the rule 3 or $(2,2)$ according to the rule 2.2 (but not both) is capable of shifting share to $\bc$. In both cases, $\bc$ receives at most $1/6$ units of share. By the second case of Example~\ref{IllExampleAdjacentLemma}, we know that $s_2(\bc) \leq 13/3$. Therefore, since at most $1/4$ units of share is received from $S_2$, we have $\ms_2(\bc) \leq s_2(\bc) + 1/4 + 1/6 \leq 19/4$.
\item Assume that $(2,1) \in C$. By the second case of Example~\ref{IllExampleAdjacentLemma} (and symmetry), we obtain that $s_2(\bc) \leq 9/2$. Since now share can be shifted to $\bc$ only from $S_2$, we obtain that $\ms_2(\bc) \leq s_2(\bc) + 1/4 \leq 19/4$.
\end{itemize}

\medskip

2) Assume that $\bc$ is adjacent to exactly two codewords. Without loss of generality, we may assume that $\bc = (0,0)$ and that the adjacent codewords are $(-1,0)$ and $(0,1)$. Let then $S_3$ and $S_4$ be sets which are obtained by rotating respectively the sets $S_1$ and $S_2$ by $2\pi/3$ (counter-clockwise in the honeycomb representation) around the origin. Again the codewords in each of these sets $S_i$ can shift at most $1/4$ units of share to $\bc$. In addition to the previous ones, at most $1/4$ units of share can also be shifted to $\bc$ from either $(2,-1)$ or $(3,0)$ (but not both) according to the rule 1.2. These observations are illustrated in Figure~\ref{Lemma1CaseB}.

Assume first that share is shifted to $\bc$ according to the rule 1.2. Without loss of generality, we may assume that $\bc$ receives share from the vertex $(3,0)$. Then we immediately have $(3,0) \in C$ and $(-1,-1) \in C$. Therefore, as above, we have $s_2(\bc) \leq 15/4$ by Lemma~\ref{SimpleEstLemma}. Furthermore, since $(-1,-1) \in C$, the codeword $\bc$ does not receive share from the set $S_4$. Thus, we have $\ms_2(\bc) \leq s_2(\bc) + 4 \cdot 1/4 \leq 19/4$.

Assume then that the rule 1.2 is not used. Since the vertices $\bc$ and $(0,1)$ are $2$-separated by $C$, there exists a codeword in $B_r(\bc) \, \triangle \, B_r(0,1)$. If $(1,-1) \in C$ or $(2,0) \in C$, then $s_2(\bc) \leq 53/15$ (by Lemma~\ref{SimpleEstLemma}) and we are immediately done since at most $1$ unit of share can be shifted to $\bc$ from the union of the sets $S_i$. If $(-2,1) \in C$, then $s_2(\bc) \leq 21/5$ and we are done since share cannot be shifted to $\bc$ from $S_2$ and $S_3$. If $(-1,-1) \in C$ or $(-2,0) \in C$, then $s_2(\bc) \leq 79/20$ (by Lemma~\ref{SimpleEstLemma}) and we are again done (since share is not shifted from $S_3$). Hence, we may assume that $(-1,2) \in C$, $(1,2) \in C$ or $(2,1) \in C$. Analogously, it can also be assumed that $(-3,0) \in C$, $(-2,-1) \in C$ or $(0,-1) \in C$ since $\bc$ and $(-1,0)$ are $2$-separated by $C$. Thus, at most two of the sets $S_i$ can shift share to $\bc$. Therefore, since $s_2(\bc) \leq 17/4$ (choose $D = \{ \bc, (-1,0), (0,1) \}$ in Lemma~\ref{SimpleEstLemma}), we have $\ms_2(\bc) \leq s_2(\bc) + 2 \cdot 1/4 \leq  19/4$.

\medskip

3) Finally, assume that all the vertices adjacent to $\bc$ are codewords, i.e. $(-1,0) \in C$, $(0,1) \in C$ and $(1,0) \in C$ (see Figure~\ref{Lemma1CaseC}). Notice that now the rule 1.2 cannot be used. Since there again exists a codeword in $B_r(\bc) \, \triangle \, B_r(0,1)$, it is easy to conclude as above that at most $5 \cdot 1/4$ units of share is shifted to $\bc$. Therefore, since $s_2(\bc) \leq 67/20$ by Lemma~\ref{SimpleEstLemma}, we have $\ms_2(\bc) \leq s_2(\bc) + 5/4 \leq 19/4$. This completes the proof of the lemma.
\end{proof}

\begin{proof}[Proof of Lemma~\ref{HexR2ReceivingLemma}]
Without loss of generality, we may assume that $\bc=(0,0)$. Assume
first that $|I_2(\bc)| \geq 2$. If now $\bc$ is adjacent to another
codeword, then $\ms_2(\bc) \leq 19/4$ (by Lemma~\ref{SimpleEstLemma}). Hence, we may assume that $(-1,0), (1,0), (0,1) \notin C$. Let then $(2,0)$ be a codeword of $C$. Since the vertices $\bc$ and $(1,0)$ are $2$-separated by C, there is at least one codeword in the symmetric difference $B_2(\bc) \, \triangle \, B_2(1,0)$ (see Figure~\ref{Lemma3CodewordsD2}). Therefore, by Lemma~\ref{SimpleEstLemma}, it is straightforward (albeit tedious) to verify that in all possible cases $\ms_2(\bc) \leq s_2(\bc) \leq 19/4$. Indeed, we can choose in Lemma~\ref{SimpleEstLemma} the set $D$ to consist of $\bc$, $(2,0)$ and a codeword in the symmetric difference $B_2(\bc) \, \triangle \, B_2(1,0)$.

\begin{figure}
\centering
\includegraphics[height=100pt]{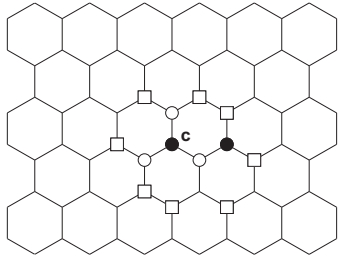}
\caption{The symmetric difference $B_2(\bc) \, \triangle \, B_2(1,0)$ consists of the squared vertices. One of these vertices is a codeword.} 
\label{Lemma3CodewordsD2}
\end{figure}

From now on, we may assume that $I_2(\bc) = \{ \bc \}$. In what follows, we use the notations: $A_1 = \{ (-1,1), (0,1), (1,1) \}$, $A_2 = \{ (-2,0), (-1,0), (-1,-1) \}$, $A_3 = \{ (1,-1), (1,0), (2,0) \}$, $A'_1 = \{ (-1,2), (1,2) \}$, $A'_2 = \{ (-3,0), (-2,-1) \}$ and $A'_3 = \{ (2,-1), (3,0) \}$. These sets are illustrated in Figure~\ref{Lemma3ASetsIllustrated}. The proof of the lemma now divides into three cases depending on the number of codewords in the set $\{ (-2,1), (2,1), (0,-1) \}$

\medskip

1) Assume first that $(-2,1), (2,1), (0,-1) \notin C$. Since the vertices $\bc$, $(-1,0)$, $(1,0)$ and $(0,1)$ are $2$-separated by $C$, each one of the sets $A'_1$, $A'_2$ and $A'_3$ contains at least one codeword. Hence, each of the sets $A_1$, $A_2$ and $A_3$ contains a vertex whose $I$-set contains at least three codewords. Indeed, if for example $(-1,2) \in C$, then $(-1,1)$ or $(0,1)$ is such a vertex in $A_1$. Therefore, we obtain that $s_2(\bc) \leq 1+6 \cdot 1/2+3 \cdot 1/3=5$.

\begin{figure}
\centering
\includegraphics[height=100pt]{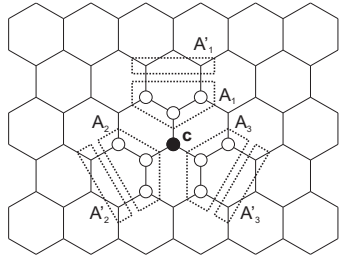}
\caption{The sets $A_1$, $A_2$, $A_3$, $A'_1$, $A'_2$ and $A'_3$ illustrated.} 
\label{Lemma3ASetsIllustrated}
\end{figure}

Consider then the set $A'_1$ that contains at least one codeword as stated above. Assume first that both $(-1,2) \in C$ and $(1,2) \in C$. If the vertex $(0,2)$ also belongs to $C$, then the $I$-sets of all the vertices in $A_1$ have size at least $3$. Then there are at least $5$ vertices in $B_2(\bc)$ which are $2$-covered by at least $3$ codewords. Therefore, we have $\ms_2(\bc) \leq s_2(\bc) \leq 1 + 4 \cdot 1/2 + 5 \cdot 1/3 = 14/3$ (and we are done). Hence, suppose that $(0,2) \notin C$. Furthermore, assume first that $(-2,2) \in C$. If also $(-3,1) \in C$, then $|I_2(0,1)| \geq 3$ and $|I_2(-1,1)| \geq 4$. Thus, we have $\ms_2(\bc) \leq s_2(\bc) \leq 1 + 5 \cdot 1/2 + 3 \cdot 1/3 + 1/4 = 19/4$. On the other hand, if $(-3,1) \notin C$, then the rule 2.3 can be applied to $\bc$ and we obtain that $\ms_2(\bc) \leq s_2(\bc) - 1/12 \leq 29/6 - 1/12 \leq 19/4$. Hence, we may assume that $(-2,2) \notin C$ and $(-2,2) \notin C$ (by symmetry). Since the vertices $(-1,2)$ and $(1,2)$ are $2$-separated by $C$, at least one of the vertices $(-3,2)$, $(-2,3)$, $(2,3)$ and $(3,2)$ belongs to $C$. Hence, we can shift at least $1/4$ units of share from $\bc$ according to the rule 7. Therefore, we have $\ms_2(\bc) \leq s_2(\bc) - 1/4 \leq 19/4$.

By the considerations above, we may without loss of generality assume that $(-1,2) \in C$ and $(1,2) \notin C$. If $(0,2) \in C$, then $1/4$ units of share can be shifted from $\bc$ according to the rule 1 or 1.2, and we are done. Thus, suppose that $(0,2) \notin C$. Assume then that $(-2,2) \in C$. If the rule 2 applies to $\bc$, then we are immediately done ($1/4$ units of share is shifted from $\bc$). On the other hand, if $(-3,1) \in C$, then by the fact that $|I_2(-1,1)| \geq 4$ we have $s_2(\bc) \leq 59/12$ and we are again done since at least $1/6$ units of share is shifted from $\bc$ according to the rule 2.2. Hence, assume that $(-2,2) \notin C$. Since $(0,1)$ and $(-1,1)$ are $2$-separated by $C$, the vertex $(-3,1)$ belongs to $C$. Now, if $(-3,2) \in C$, then $1/4$ units of share can be shifted from $\bc$ according to the rule~3 and we are done. Thus, we may assume that $(-3,2) \notin C$. If $(-4,1) \in C$ and $(-3,0) \notin C$, then the rule 4 can be applied to $\bc$ and we are done. Furthermore, if $(-4,1) \in C$, and $(-3,0) \in C$, then instead of the set $A'_1$ consider the set $A'_3$. Repeating the previous arguments for the set $A'_3$, we obtain that $\ms_2(\bc) \leq 19/4$ also in this case. Thus, we may assume that $(-4,1) \notin C$. The surroundings of the codeword $\bc$ (obtained above) is illustrated in Figure~\ref{Lemma3LeftIllustrated}.

\begin{figure}
\centering
\subfigure[The first case]{
\includegraphics[height=100pt]{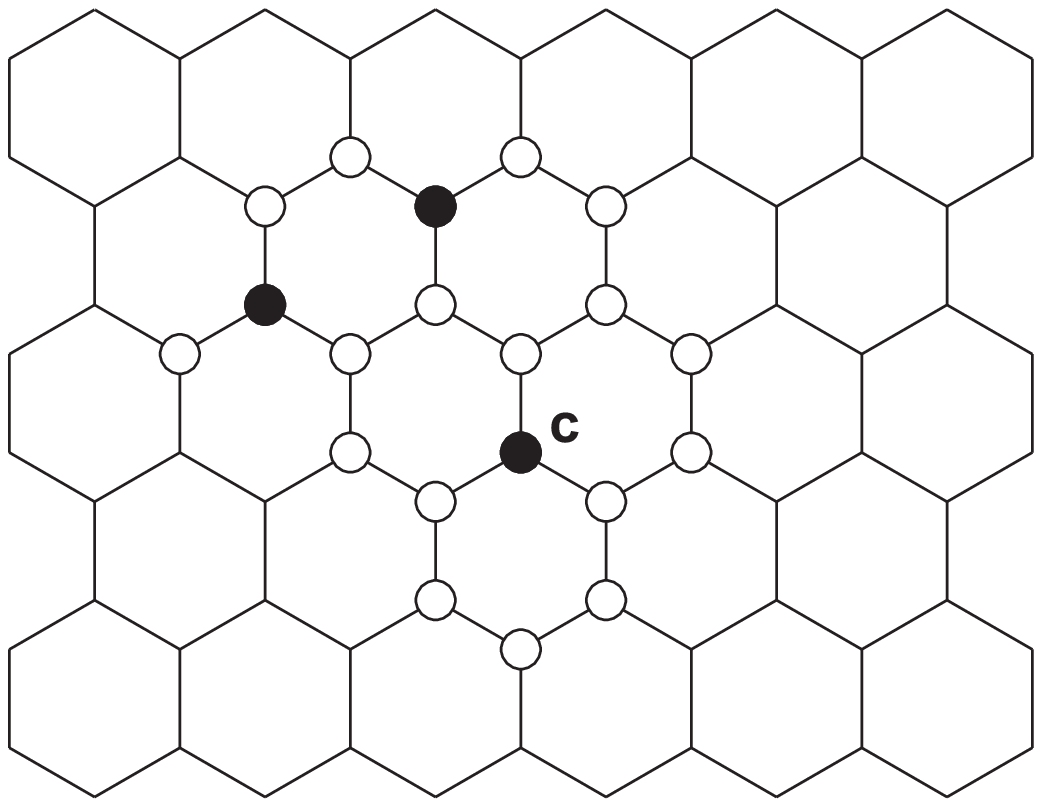}
\label{Lemma3LeftIllustrated}
}
\subfigure[The second case]{
\includegraphics[height=100pt]{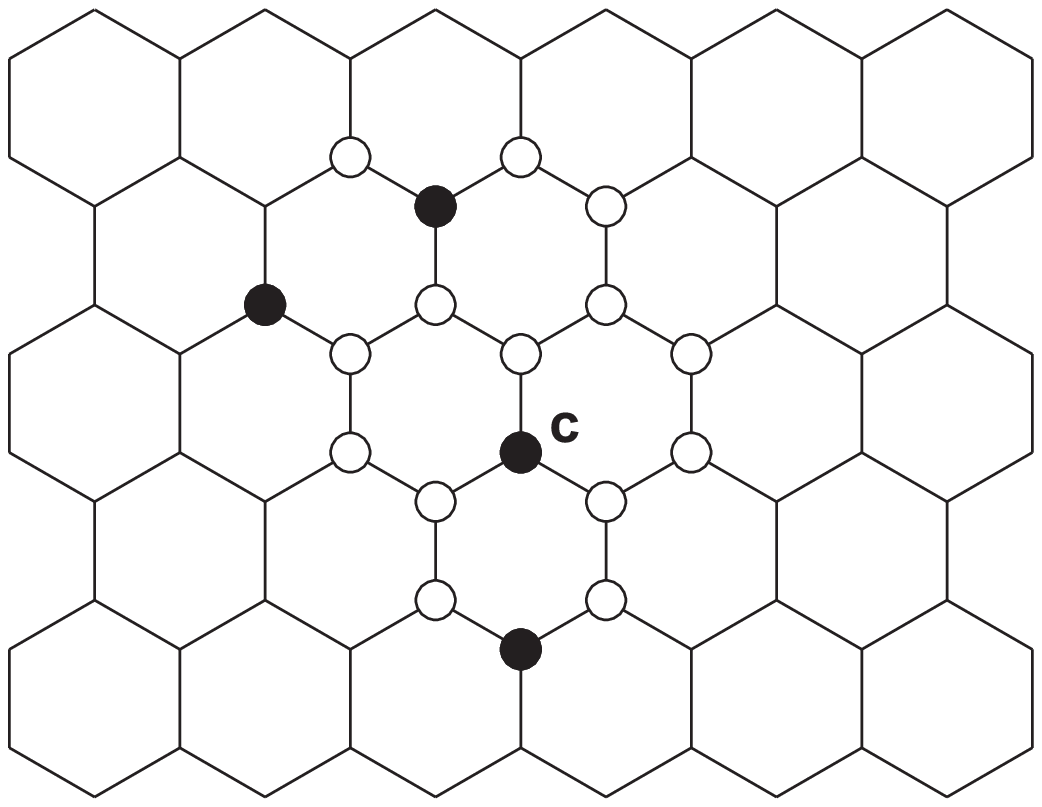}
\label{Lemma3UpDownIllustrated}
}
\caption{The surroundings of the codeword $\bc$ illustrated in (a) the first and (b) the second part of the proof of Lemma~\ref{HexR2ReceivingLemma}.} 
\label{HexR2SurroundingsIllustrated}
\end{figure}


The previous reasoning also applies when we consider the sets $A'_2$ and $A'_3$ instead of $A'_1$. This leads straightforwardly to the observation that we have only two different neighbourhoods of $\bc$ (up to rotations and reflections). These neighbourhoods are illustrated in Figure~\ref{HexR2ReceivingIllustrated}.

\begin{figure}
\centering
\subfigure[\!\!\!]{
\includegraphics[height=100pt]{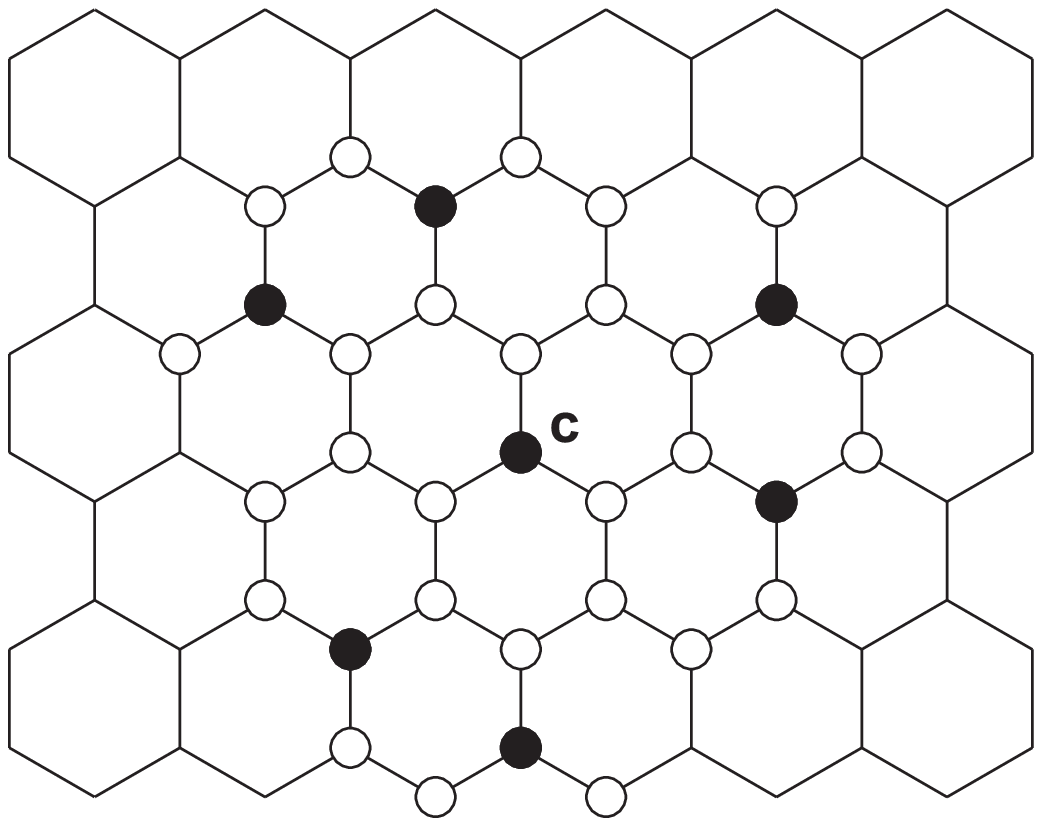}
\label{HexR2ReceivingIllustratedA}
}
\subfigure[\!\!\!]{
\includegraphics[height=100pt]{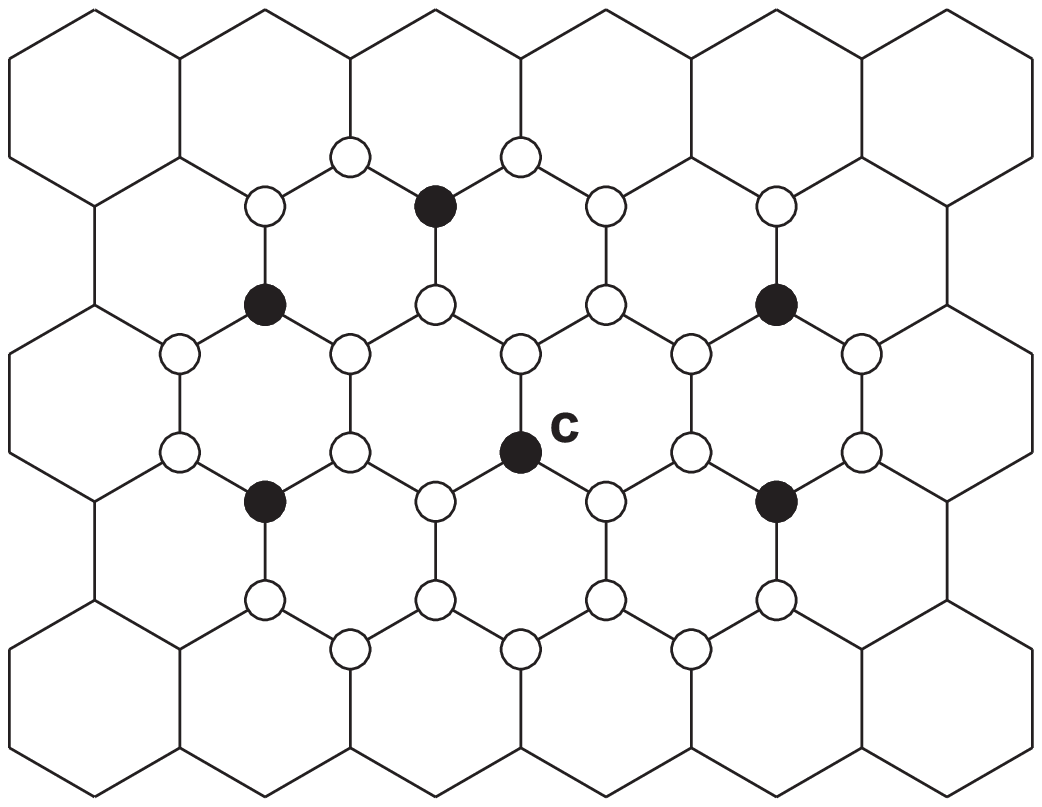}
\label{HexR2ReceivingIllustratedB}
}
\caption{Two cases of the proof of Lemma~\ref{HexR2ReceivingLemma} illustrated.} 
\label{HexR2ReceivingIllustrated}
\end{figure}

Consider first the case in Figure~\ref{HexR2ReceivingIllustratedA}. In what follows, we show that $\bc$ shifts $1/12$ units of share to $(-1,2)$ or $(1,3)$, or that we originally have at least $2$ vertices in $A_1$ such that their $I$-sets are at least of size $3$. In both cases, the (maximum) share of $\bc$ is reduced by at least $1/12$. (Actually, in the latter case, the share is reduced by $1/6$.) This observation can then be generalized to the (other two) symmetrical cases implying that $\ms_2(\bc) \leq 5-3\cdot 1/12 = 19/4$. If $(2,2) \in C$, then $(-1,1) \in A_1$ and $(1,1) \in A_1$ are $2$-covered by three codewords. Hence, we may assume that $(2,2) \notin C$. If we can shift $1/12$ units of share from $\bc$ to $(-1,2)$ according to the rule 8, then we are immediately done. Therefore, we may assume that $(-2,3) \notin C$ and $(0,3) \notin C$. Since $(-1,2)$ and $(1,2)$ are $2$-separated by $C$, the vertex $(2,3)$ belongs to $C$. Furthermore, since $(-1,2)$ and $(0,2)$ are $2$-separated by $C$, at least one of the vertices $(-1,3)$ and $(1,3)$ is a codeword. Therefore, at least $1/12$ units of share can shifted from $\bc$ according to the rule 6 or 9.

Consider then the case in Figure~\ref{HexR2ReceivingIllustratedB}.
Let us now show that $\bc$ shifts at least $1/6$ units of share to
$(-1,-2)$ or $(1,-2)$, or that we originally have more than $2$ vertices in $A_2 \cup A_3$ such that their $I$-sets are at least of size $3$. In both cases, the (maximum) share of $\bc$ is reduced by at least $1/6$. This result together with the observation in the previous paragraph, 
then implies that $\ms_2(\bc) \leq 5-1/6-1/12 = 19/4$. If now $(0,-2) \in C$, then we know that $(-2,0) \in A_2$, $(2,0) \in A_3$, and at least one of the vertices $(-1,-1) \in A_2$ and $(1,-1) \in A_3$ are such that their $I$-sets are at least of size $3$. Hence, we may assume that $(0,-2) \notin C$. Since $\bc$, $(-1,-1)$ and $(1,-1)$ are $2$-separated by $C$, the vertices $(-2,-2)$ and $(2,-2)$ belong to $C$. Furthermore, since $(0,-1)$ is $2$-covered by a codeword of $C$, we have $(-1,-2) \in C$ or $(1,-2) \in C$. Therefore, we can shift at least $1/6$ units of share to $(-1,-2)$ or $(1,-2)$ according to the rule 5. In conclusion, if $\bc$ is a codeword such that $I_2(\bc) = \{ \bc \}$ and $(-2,1), (2,1), (0,-1) \notin C$, then we have $\ms_2(\bc) \leq 19/4$.


\medskip

2) Assume then that $\bc$ is a codeword such that $I_2(\bc) = \{ \bc \}$, $(-2,1) \notin C$, $(2,1) \notin C$ and $(0,-1) \in C$. Now, by considering the vertices $(-1,-1) \in A_2$, $(-1,0) \in A_2$, $(1,-1) \in A_3$ and $(1,0) \in A_3$, we obtain that there are at least $3$ vertices in $A_2 \cup A_3$ which are $2$-covered by at least $3$ codewords. Thus, since there is also one such vertex in $A_1$, we have $s_2(\bc) \leq 1+5 \cdot 1/2 + 4 \cdot 1/3 = 29/6$. Assume first that both $(-1,2)$ and $(1,2)$ belong to $C$. If $(-2,2) \in C$, $(0,2) \in C$ or $(2,2) \in C$, there are at least $2$ vertices in $A_1$ with the $I$-set of size at least $3$ and, therefore, we have $s_2(\bc) \leq 19/4$. Hence, according to the rule 7.1, $1/12$ units of share can be shifted from $\bc$ (since $(-1,2)$ and $(1,2)$ are $2$-separated by $C$) and we are done. Thus, without loss of generality, we may assume that $(-1,2) \in C$ and $(1,2) \notin C$. If now $(-3,1) \in C$ and $(-2,2) \in C$, then $|I_2(-1,1)| \geq 4$ and there is no problem since $s_2(\bc) \leq 1+5 \cdot 1/2 + 3 \cdot 1/3 + 1/4 = 19/4$. Furthermore, if the rules 1.1 or 2.1 can be used, then we are again done. Thus, we may assume that $(-2,2) \notin C$ and $(0,2) \notin C$. Therefore, since $(-1,1)$ and $(0,1)$ are $2$-separated by $C$, we obtain that $(-3,1) \in C$. We have now arrive at the constellation illustrated in Figure~\ref{Lemma3UpDownIllustrated}. 
If now one of the vertices $(-4,0)$, $(-3,0)$, $(-3,-1)$, $(-2,-1)$, $(0,-2)$ and $(2,-1)$ belongs to $C$, then by Lemma~\ref{SimpleEstLemma} we immediately have $s_2(\bc) \leq 19/4$. Hence, we may assume that none of these vertices belongs to $C$. Thus, since $(-1,0)$, $(-1,-1)$ and $(1,0)$ are $2$-separated by $C$, we have $(-2,-2) \in C$ and $(3,0) \in C$. If now $(3,-1) \in C$ or $(4,0) \in C$, then again $s_2(\bc) \leq 19/4$ and we are done. Therefore, since $(-1,0)$ and $(1,-1)$ are $2$-separated by $C$, we have $(2,-2) \in C$ and $1/12$ units of share can be shifted from $\bc$ to $(0,-1)$ according to the rule 10. Hence, we have $\ms_2(\bc) \leq 19/4$.

\medskip

3) Finally, assume that $\bc$ is a codeword such that $I_2(\bc) = \{ \bc \}$, and that at least two of the vertices $(-2,1)$, $(2,1)$ and
$(0,-1)$ belong to $C$. Then, by Lemma~\ref{SimpleEstLemma}, we have
$s_2(\bc) \leq 14/3$. This observation completes the proof of the lemma.
\end{proof}

\section*{Acknowledgements}

We would like to thank the anonymous referee for providing constructive comments to improve the paper.

\bibliographystyle{abbrv}

\end{document}